\newtheorem{lemma}{Lemma}[section]
\newtheorem{theorem}[lemma]{Theorem}
\newtheorem{proposition}[lemma]{Proposition}
\newtheorem{conjecture}[lemma]{Conjecture}
\newtheorem{corollary}[lemma]{Corollary}
\theoremstyle{definition}
\newtheorem{definition}[lemma]{Definition}
\newtheorem{remark}[lemma]{Remark}
\numberwithin{equation}{section}
\numberwithin{figure}{section}
\newcommand{\Bset}{\mathcal{B}}
\newcommand{\Cset}{\mathcal{C}}
\newcommand{\Iset}{\mathcal{I}}
\newcommand{\Lset}{\mathcal{L}}
\newcommand{\Mset}{\mathcal{M}}
\newcommand{\Xset}{\mathcal{X}}
\newcommand{\Yset}{\mathcal{Y}}
\begin{document}

\newcommand{\RR}{\mathbb{R}^2}
\newcommand{\RN}{\mathbb{R}^n}

\title{On maximal curves of $n$-correct sets}

\author{H. Hakopian, G. Vardanyan, N. Vardanyan\\ \\
 \textit{Yerevan State University, Yerevan, Armenia}\\   \textit{Institute of Mathematics of NAS RA}}
\date{}
\maketitle

\begin{abstract} 
Suppose $\Xset$ is an $n$-correct set of nodes in the plane, that is, it admits a unisolvent interpolation with bivariate polynomials of total degree less than or equal to $n.$
Then an algebraic curve $q$ of degree $k\le n$ can pass through at most $d(n,k)$ nodes of $\Xset,$ where $d(n,k)={{n+2}\choose {2}}-{{n+2-k}\choose {2}}.$ A  curve $q$ of degree $k\le n$ is called maximal if it passes through exactly $d(n,k)$ nodes of $\Xset.$ In particular, a maximal line is a line passing through $d(n,1)=n+1$ nodes of $\Xset.$ Maximal curves are an important tool for the study of $n$-correct sets. We present new properties of maximal curves, as well as extensions of known properties.
\end{abstract}

{\bf Keywords:} Bivariate interpolation, $n$-correct node set, $GC_n$ set, $n$-independent set, algebraic curve, maximal curve, maximal line, the Cayley-Bacharach theorem, the Gasca-Maeztu conjecture.

{\bf Mathematics Subject Classification 2020:} 41A05, 41A63, 14H50

\section{Introduction}

Denote the space of bivariate polynomials by $\Pi.$ Denote the space of bivariate polynomials of total degree not exceeding $n,\ n\ge 0,$ by $\Pi_n$. We have that
$$N:=N_n:=\dim\Pi_n={{n+2}\choose {2}}.$$
Let us set \begin{equation}\label{n<0}N_n=0,\ \hbox{if}\ n<0.
\end{equation}

\noindent Consider a set
$$\Xset_s=\{ (x_1, y_1), \dots , (x_s, y_s) \}$$  of $s$ distinct nodes in the plane.

The problem of finding a polynomial $p \in \Pi_n$ satisfying the conditions
\begin{equation}\label{eq1}
p(x_i, y_i) = c_i, \ \ \quad i = 1, 2, \dots s  ,
\end{equation}
for a given data $\bar c:=\{c_1, \dots, c_s\}$ is called \emph{interpolation problem.}
\begin{definition}
A set of nodes $\Xset_s$ is called
$n$-\emph{solvable} if for any data $\bar c$ there exists a
polynomial $p \in \Pi_n$ satisfying the conditions
\eqref{eq1}.
\end{definition}
\begin{definition}
A set of nodes $\Xset_s$ is called
$n$-\emph{correct} if for any data $\bar c$ there exists a
\emph{unique} polynomial $p \in \Pi_n$ satisfying the conditions
\eqref{eq1}.
\end{definition}

The conditions \eqref{eq1} give a system of $s$ linear equations with $N$ unknowns, which are the coefficients of the polynomial $p.$ The $n$-correctness means that the linear system has a unique
solution for any right side values $\bar c.$ Hence, the following is a necessary condition of poisedness: $s = N.$

Thus, one may consider $n$-correctness only with sets of nodes $\Xset_N.$ 

In this latter case we have
\begin{proposition} \label{correctii}
Let $\Xset:=\Xset_N$ be a set of nodes. Then the following are equivalent:
\begin{enumerate}
\item
The set $\Xset$ is $n$-correct.
\item
The set $\Xset$ is $n$-solvable.
\item
$p \in \Pi_n,\ p|_\Xset=0\implies p = 0.$
\end{enumerate}
\end{proposition}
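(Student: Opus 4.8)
The plan is to recognize all three conditions as statements about a single linear map and then to exploit the standing hypothesis that the node set has exactly $N$ elements, so that domain and codomain share the same finite dimension. Concretely, I would introduce the evaluation (collocation) map
$$L \colon \Pi_n \to \mathbb{R}^N, \qquad L(p) = \bigl(p(x_1,y_1),\dots,p(x_N,y_N)\bigr),$$
which is linear because each point evaluation is a linear functional. Since $\dim\Pi_n = N$ by definition and $\dim\mathbb{R}^N = N$, the map $L$ goes between two vector spaces of equal finite dimension. Rewriting the three statements in terms of $L$: statement (ii), $n$-solvability, says that $L(p)=\bar c$ has a solution for every $\bar c$, i.e.\ $L$ is surjective; statement (i), $n$-correctness, says that for every $\bar c$ the solution exists and is unique, i.e.\ $L$ is bijective; and statement (iii) says exactly that $\ker L=\{0\}$, i.e.\ $L$ is injective.

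Next I would close the cycle (i)$\Rightarrow$(ii)$\Rightarrow$(iii)$\Rightarrow$(i). The implication (i)$\Rightarrow$(ii) is immediate, since unique existence in particular gives existence. For (ii)$\Rightarrow$(iii) I would invoke the rank--nullity theorem: surjectivity gives $\dim(\operatorname{im}L)=N$, whence $\dim(\ker L)=\dim\Pi_n-\dim(\operatorname{im}L)=N-N=0$, so $\ker L=\{0\}$, which is precisely (iii). For (iii)$\Rightarrow$(i), injectivity gives $\dim(\ker L)=0$, so rank--nullity yields $\dim(\operatorname{im}L)=N=\dim\mathbb{R}^N$, forcing $L$ to be surjective as well; a map that is both injective and surjective is bijective, which is exactly $n$-correctness.

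Equivalently, one may phrase the entire argument through the $N\times N$ collocation matrix $V$ whose rows record the values of a fixed basis of $\Pi_n$ at the nodes: $n$-solvability means $V$ has full row rank, $n$-correctness means $V$ is invertible, and (iii) means the homogeneous system $V\mathbf{a}=0$ has only the trivial solution. For a square matrix these three properties are all equivalent to $\det V\neq 0$, which again gives the claim. I expect there to be no genuine obstacle here: the proposition is a direct manifestation of the equivalence of injectivity, surjectivity and bijectivity for a linear map between equal-dimensional spaces. The only point that must be made explicit — and the one that fails without the reduction to $s=N$ carried out before the proposition — is that $\dim\Pi_n=\dim\mathbb{R}^N$; it is exactly this dimensional balance, together with rank--nullity, that renders existence and uniqueness inseparable.
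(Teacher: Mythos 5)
Your argument is correct and is precisely the standard linear-algebra reasoning that the paper itself relies on: the paper states this proposition without a separate proof, having just set it up as a square linear system in the $N$ coefficients of $p$, so your evaluation-map/rank--nullity formalization is essentially the same approach, merely written out in full.
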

Here $p|_\Xset$ denotes the restriction of $p$ on $\Xset.$

\begin{definition}
A polynomial $p \in \Pi_n$ is called $n$-\emph{fundamental polynomial}
for $A := (x_k, y_k) \in \Xset,$  if
$$ p|_{\Xset\setminus\{A\}}=0\ \hbox{and}\ p(A)=1.$$
\end{definition}
The above fundamental polynomial we denote by 
$$p_k^\star=p_A^\star=p_{A,\Xset}^\star.$$

Let us mention that sometimes we call $n$-fundamental a polynomial $p\in \Pi_n$ satisfying the conditions
$$ p|_{\Xset\setminus\{A\}}= 0\ \hbox{and}\ p(A)\neq 0,$$
since it equals a non-zero constant times $p_A^\star.$

\begin{definition}
A set of nodes $\Xset$
is called $n$-\emph{independent} if any node $A\in \Xset$ has an $n$-fundamental polynomial $p_{A,\Xset}^\star.$
\end{definition}
Evidently the fundamental polynomials are linearly independent. Hence, a necessary condition of $n$-independence of $\Xset$ is: $\#\Xset\le N.$

If a set of nodes $\Xset_s$
is $n$-independent then  
the following Lagrange formula gives a polynomial
$p\in\Pi_n$ satisfying the conditions \eqref{eq1}:
\begin{equation} \label{Lagrange}p(x,y)=\sum_{i=1}^sc_ip^\star_i(x,y).
\end{equation}
This yields 
\begin{proposition} A set of nodes $\Xset$
is $n$-independent if and only if it is $n$-solvable.
\end{proposition}

In the sequel we will need the following  
\begin{proposition} [\cite{HT}, Lemma 2.2] \label{HT1} Any $n$-independent  set of nodes $\Xset_s$ with the cardinality $s<N$ can be enlarged till an 
$n$-correct set $\Xset_N.$ 
\end{proposition}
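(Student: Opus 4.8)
The plan is to enlarge $\Xset_s$ one node at a time, preserving $n$-independence at each step, until the cardinality reaches $N$; at that point $n$-independence automatically upgrades to $n$-correctness. Thus it suffices to establish the single inductive step: if $\Xset_s$ is $n$-independent with $s<N$, then there is a node $A\notin\Xset_s$ for which $\Xset_s\cup\{A\}$ is again $n$-independent.

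For the inductive step I would first produce a nonzero polynomial vanishing on $\Xset_s$. The conditions $p|_{\Xset_s}=0$ amount to $s$ homogeneous linear equations on the $N$-dimensional space $\Pi_n$; since $s<N$ the solution space is nontrivial, so there exists $q\in\Pi_n$, $q\neq 0$, with $q|_{\Xset_s}=0$. As $q$ is not the zero polynomial, its zero set is a proper algebraic subset of the plane, hence there is a point $A$ with $q(A)\neq 0$. Such an $A$ cannot lie in $\Xset_s$, where $q$ vanishes, so it is a genuinely new node, and I set $\Xset_{s+1}:=\Xset_s\cup\{A\}$.

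It then remains to verify $n$-independence of $\Xset_{s+1}$ by exhibiting fundamental polynomials for all of its nodes. For the new node, $p_A^\star:=q/q(A)$ vanishes on $\Xset_s$ and equals $1$ at $A$, so it is the fundamental polynomial of $A$ relative to $\Xset_{s+1}$. For each old node $B\in\Xset_s$, starting from its fundamental polynomial $p_{B,\Xset_s}^\star$ relative to $\Xset_s$, I would set
$$ p_{B,\Xset_{s+1}}^\star := p_{B,\Xset_s}^\star - p_{B,\Xset_s}^\star(A)\, p_A^\star. $$
Since $p_A^\star$ vanishes on $\Xset_s$, this polynomial takes the same values on $\Xset_s$ as $p_{B,\Xset_s}^\star$, namely $1$ at $B$ and $0$ at the remaining nodes of $\Xset_s$, and by construction it vanishes at $A$ as well. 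Hence every node of $\Xset_{s+1}$ has a fundamental polynomial, i.e.\ $\Xset_{s+1}$ is $n$-independent.

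Iterating this step $N-s$ times yields an $n$-independent set $\Xset_N$ of cardinality $N$. To conclude that it is $n$-correct I would invoke the equivalences already established: an $n$-independent set is $n$-solvable via the Lagrange formula \eqref{Lagrange}, and for a set of exactly $N$ nodes $n$-solvability coincides with $n$-correctness by Proposition~\ref{correctii}. No genuinely hard step arises here; the only points requiring care are ensuring that the newly chosen node is distinct from the existing ones, which is automatic because $q$ vanishes on $\Xset_s$ but not at $A$, and checking that the correction term leaves the interpolation values on the old nodes unchanged, which follows from $p_A^\star|_{\Xset_s}=0$.
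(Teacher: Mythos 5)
The paper does not prove this proposition itself but cites it from [HT, Lemma 2.2]; your argument is correct and is the standard one for this fact (add one node at a time at a point where some nonzero $q\in\Pi_n$ vanishing on the current set is nonzero, take $q/q(A)$ as the new fundamental polynomial, and correct the old fundamental polynomials by subtracting their value at $A$ times $p_A^\star$). All steps check out, and the final passage from an $n$-independent set of cardinality $N$ to $n$-correctness via Proposition~\ref{correctii} is exactly as the paper's framework intends.
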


We say that a node $A$ of an $n$-correct set $\Xset$ uses a line $\ell$ if
$$p_{A, \Xset}^\star= \ell r, \ \hbox{where} \ r\in\Pi_{n-1}.$$

Let us now consider a particular type of $n$-correct sets (see \cite{HV}) that satisfy a so-called geometric characterization (GC) property introduced by K.C. Chung and T.H. Yao:
\begin{definition}[\cite{CY77}]
An $n$-correct set ${\mathcal X}$ is called \emph{$GC_n$ set }
 if  the
$n$-fundamental polynomial of each node $A\in{\mathcal X}$ is a
product of $n$  linear factors.
\end{definition}
Thus, each node of $GC_n$ set uses exactly $n$ lines.

\begin{definition} Let $\Xset$ be a set of nodes.
We say, that a line $\ell$ is a $k$-\emph{node line} if it passes through exactly $k$ nodes of $\Xset.$\end{definition}

The following proposition is well-known (see e.g. \cite{HJZ09b}
Prop. 1.3):
\begin{proposition}\label{prp:n+1ell}
Suppose that a polynomial $p \in
\Pi_n$ vanishes at $n+1$ points of a line $\ell.$ Then, we have that
$p|_\ell=0$ and $p = \ell  q  ,\ \text{where} \ q\in\Pi_{n-1}.$
\end{proposition}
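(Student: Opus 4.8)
The plan is to reduce to a convenient normal form by an affine change of variables, then settle $p|_\ell=0$ by a univariate vanishing argument on the line, and finally factor out the linear form $\ell$. First I would choose an invertible affine map $T$ of the plane that carries $\ell$ onto the $x$-axis $\{y=0\}$. Since an affine substitution preserves the total degree of a polynomial and maps lines to lines bijectively, it suffices to prove the statement for $\tilde p:=p\circ T^{-1}\in\Pi_n$ and for the line $\{y=0\}$; the conclusion then transfers back under $T$. I would denote the images of the $n+1$ given points by $(\xi_1,0),\dots,(\xi_{n+1},0)$, where the $\xi_i$ are pairwise distinct.

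Next I would restrict $\tilde p$ to the line. The univariate polynomial $g(x):=\tilde p(x,0)$ has degree at most $n$, because $\tilde p\in\Pi_n$. By hypothesis $g$ vanishes at the $n+1$ distinct points $\xi_1,\dots,\xi_{n+1}$, so $g\equiv 0$. This is precisely the assertion $\tilde p|_{\{y=0\}}=0$, which upon transporting back through $T$ gives $p|_\ell=0$.

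To obtain the factorization, I would group $\tilde p$ by powers of the second variable, writing $\tilde p(x,y)=\tilde p(x,0)+y\,\tilde q(x,y)$ with $\tilde q\in\Pi_{n-1}$. Using $\tilde p(x,0)\equiv 0$ from the previous step yields $\tilde p=y\,\tilde q$. Transporting back through $T$ turns the factor $y$ into a nonzero scalar multiple of $\ell$ and $\tilde q$ into some $q\in\Pi_{n-1}$, so that $p=\ell q$ after absorbing the scalar into $q$.

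The only slightly delicate point is the factorization step, namely making sure the quotient genuinely lies in $\Pi_{n-1}$. This is transparent in the normal form: each monomial of $\tilde p$ that survives the grouping carries a factor of $y$, and hence drops one unit in total degree when $y$ is divided out, so $\deg\tilde q\le n-1$; since the affine map $T$ does not raise degrees, $q\in\Pi_{n-1}$ as well. Everything else is routine, the essential content being the passage from ``$n+1$ zeros on a line'' to ``identically zero on that line'' via the one-dimensional fundamental theorem of algebra.
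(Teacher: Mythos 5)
Your proof is correct. Note that the paper does not actually prove this proposition --- it is quoted as well-known (from \cite{HJZ09b}, Prop.~1.3) --- and your argument (normalize $\ell$ to $\{y=0\}$ by an affine change of variables, kill the restriction via the $n+1$ roots of a univariate polynomial of degree at most $n$, then factor out $y$ by grouping monomials) is exactly the standard proof of this fact, with the degree bookkeeping for the quotient handled properly.
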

Therefore at most $n+1$ nodes of an $n$-independent set  can be collinear. An $(n+1)$-node line $\ell$ is called a \emph{maximal line} (C. de Boor, \cite{dB07}).

Denote the set of maximal lines of an $n$-correct set $\Xset$ by $\Mset(\Xset).$

Let us bring some basic properties of maximal lines.

\begin{proposition} \label{hatk} Let $\Xset$ be an $n$-correct set. Then the following hold:
\begin{enumerate}
\item
Any two maximal lines intersect at a node of $\Xset.$
\item
Any three maximal lines are not concurrent.
\item
$\#\Mset(\Xset)\le n+2.$
\end{enumerate}
\end{proposition}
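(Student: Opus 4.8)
The plan is to establish the three assertions in order, obtaining the cardinality bound from the first two, with Proposition~\ref{prp:n+1ell} serving as the basic vanishing tool throughout; recall that a maximal line carries exactly $n+1$ nodes and that $\#\Xset=N=\binom{n+2}{2}$. For part (i) I would argue by contradiction. Let $\ell_1,\ell_2\in\Mset(\Xset)$ and suppose they do not meet at a node of $\Xset$; since two distinct lines meet in at most one point (and in none if parallel), this means no node lies on both. Fix a node $A\in\ell_2$. Then $A\notin\ell_1$, so $p_A^\star\in\Pi_n$ vanishes at all $n+1$ nodes of $\ell_1$, and by Proposition~\ref{prp:n+1ell} we may write $p_A^\star=\ell_1 q$ with $q\in\Pi_{n-1}$. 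For each of the $n$ nodes $B\in\ell_2$ with $B\neq A$ we have $B\notin\ell_1$, hence $\ell_1(B)\neq0$, while $p_A^\star(B)=0$; this forces $q(B)=0$. Thus $q\in\Pi_{n-1}$ vanishes at $n=(n-1)+1$ nodes of $\ell_2$, so $q|_{\ell_2}=0$ by Proposition~\ref{prp:n+1ell} again; in particular $q(A)=0$, whence $p_A^\star(A)=\ell_1(A)q(A)=0$, contradicting $p_A^\star(A)=1$.

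Part (ii) I would prove by the same peeling technique, now applied twice. Suppose $\ell_1,\ell_2,\ell_3\in\Mset(\Xset)$ pass through a common point $P$; by part (i) each pairwise intersection is a node, so $P\in\Xset$. Choose a node $A\in\ell_1$ with $A\neq P$. As in (i), $A\notin\ell_2\cup\ell_3$, so $p_A^\star$ vanishes at all $n+1$ nodes of $\ell_2$ and hence $p_A^\star=\ell_2 q$ with $q\in\Pi_{n-1}$. Evaluating at the $n$ nodes of $\ell_3$ other than $P$ (which all avoid $\ell_2$, since $\ell_2\cap\ell_3=\{P\}$) gives $q=\ell_3 s$ with $s\in\Pi_{n-2}$. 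Finally, evaluating $p_A^\star=\ell_2\ell_3 s$ at the $n-1$ nodes of $\ell_1$ other than $P$ and $A$ (which avoid $\ell_2\cup\ell_3$) forces $s$ to vanish at $(n-2)+1$ nodes of $\ell_1$, so $s|_{\ell_1}=0$ and $s(A)=0$; this contradicts $p_A^\star(A)=\ell_2(A)\ell_3(A)s(A)=1$. The cases $n\le1$ are immediate by counting, since three concurrent maximal lines already require more than $N$ distinct nodes.

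Given (i) and (ii), part (iii) is a short counting step. Fix one maximal line $\ell_1$, which contains exactly $n+1$ nodes. By (i) every other maximal line meets $\ell_1$ at a node, and by (ii) these meeting points are pairwise distinct, for two maximal lines meeting $\ell_1$ at the same node would be concurrent with $\ell_1$. Hence the remaining maximal lines inject into the $n+1$ nodes of $\ell_1$, giving $\#\Mset(\Xset)-1\le n+1$, that is, $\#\Mset(\Xset)\le n+2$.

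The main obstacle is the degree bookkeeping in the successive factorizations of (i) and (ii): at each stage one must verify that the relevant fundamental polynomial vanishes at enough nodes of the next line, and crucially at nodes lying off the already-extracted factors, so that the linear factor is nonzero there and the vanishing transfers to the cofactor of strictly smaller degree. Confirming that exactly $(n-1)+1$, respectively $(n-2)+1$, qualifying nodes survive after deleting the common point(s) is the delicate point; once this is in place, Proposition~\ref{prp:n+1ell} applies at the reduced degree and the contradiction with $p_A^\star(A)=1$ is automatic.
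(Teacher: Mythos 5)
Your argument is correct. The paper states Proposition~\ref{hatk} as a known basic fact without giving a proof, and your repeated-peeling argument via Proposition~\ref{prp:n+1ell} (factor out each maximal line, transfer the vanishing to the cofactor at the nodes off the extracted factors, and contradict $p_A^\star(A)=1$), together with the injection of the remaining maximal lines into the $n+1$ nodes of a fixed one for part (iii), is exactly the standard proof of these properties; the degree and node counts at each stage check out.
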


Next we present the Gasca-Maeztu (or, briefly GM) conjecture:

\begin{conjecture}[\cite{GM82}, Sect. 5]\label{conj:GM}
For any $GC_n$ set there exists at least
one maximal line.
\end{conjecture}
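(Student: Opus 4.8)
The plan is to attack the conjecture by contradiction, exploiting the one feature that distinguishes a $GC_n$ set: the fundamental polynomial of \emph{every} node splits into $n$ linear factors, so each node uses exactly $n$ lines. Suppose $\Xset$ is a $GC_n$ set possessing \emph{no} maximal line, so that no line carries more than $n$ nodes of $\Xset$ (a maximal line being an $(n+1)$-node line; cf.\ Proposition~\ref{prp:n+1ell}). Fix a node $A$ and write $p_A^\star=\ell_1\cdots\ell_n$. Since $p_A^\star$ vanishes precisely on $\Xset\setminus\{A\}$ among the nodes and $p_A^\star(A)\neq 0$, no $\ell_i$ passes through $A$, while every node other than $A$ is a zero of the product and hence lies on some $\ell_i$. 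Thus the $n$ used lines of $A$ \emph{cover exactly} $\Xset\setminus\{A\}$, and this forced covering is the engine of the argument: the goal is to show that the absence of a maximal line makes the covering impossible.

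First I would run the incidence count. Let $k_i$ be the number of nodes on $\ell_i$. The covering condition gives $\sum_{i=1}^n k_i\ge N-1=\tfrac12 n(n+3)$ (with equality iff the lines share no nodes), while the no-maximal-line hypothesis gives $k_i\le n$, hence $\sum_{i=1}^n k_i\le n^2$. The two bounds are contradictory exactly when $n^2<\tfrac12 n(n+3)$, i.e.\ for $n\le 2$, which settles the base cases cleanly. For $n\ge 3$ the crude count leaves slack, so the $\ell_i$ must genuinely overlap; the next step is to control those overlaps. As two distinct lines meet in a single point, the double counting is governed by how many of the $\binom{n}{2}$ pairwise intersections land on nodes, and I would combine this with concurrency restrictions of the type recorded in Proposition~\ref{hatk} to bound how many nodes can lie on several of the $\ell_i$ at once.

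To promote these local constraints to a global obstruction I would set up an induction on $n$ driven by restriction to lower-degree subconfigurations, using the inductive hypothesis that every $GC_m$ set with $m<n$ carries a maximal line, fed into the $n$-independent subsets furnished by Proposition~\ref{HT1}. The algebraic input I expect to be decisive is the Cayley–Bacharach theorem flagged in the keywords: viewing $\ell_1\cdots\ell_n$ as a degree-$n$ curve and intersecting it with a suitable low-degree curve through a block of the nodes in exactly the product of the degrees forces a remaining node onto a prescribed curve. Iterating such forced alignments is the mechanism by which one hopes to accumulate collinearities until a line with $n+1$ nodes is manufactured, contradicting the standing assumption.

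The hard part — indeed the reason this remains a \emph{conjecture} rather than a theorem — is closing the incidence count for general $n$. For $n\ge 3$ the purely combinatorial bounds above do not force a maximal line, and while the Cayley–Bacharach alignments are strong enough to finish the verified range (small $n$), they have not been organised into an argument that scales uniformly in $n$. I would therefore expect the central obstacle to be exactly this: producing a single $(n+1)$-node line requires controlling, simultaneously and for arbitrary $n$, the intersection pattern of all the used lines across every node, and no uniform mechanism for doing so is presently known.
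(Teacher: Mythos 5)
The statement you were asked to prove is not a theorem of the paper but the Gasca--Maeztu \emph{conjecture} itself: the paper offers no proof, states that the claim is evident for $n=2$, and records that it has been verified only for $n=3,4,5$ (by \cite{GM82}, \cite{B90}, \cite{HJZ14}); for general $n$ it remains open. So there is no ``paper's own proof'' to compare against, and no complete proof is possible here by any currently known argument. To your credit, your write-up diagnoses this correctly rather than pretending to close the gap.

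On the substance of what you do write: the covering observation is right (the $n$ lines used by a node $A$ of a $GC_n$ set must cover $\Xset\setminus\{A\}$, and none passes through $A$), and the incidence count $\sum k_i\ge N-1=\tfrac12 n(n+3)$ versus $\sum k_i\le n^2$ correctly disposes of $n\le 2$, which is exactly the ``evident'' case the paper mentions. Beyond that, everything is programmatic: the proposed control of pairwise intersections, the induction on $n$, and the Cayley--Bacharach alignment step are plausible ingredients (they do appear in the published proofs for $n=4,5$, which are long and case-heavy), but you give no mechanism that actually manufactures an $(n+1)$-node line for general $n$, and none is known. The concrete gap is therefore the entire inductive step for $n\ge 3$: your Proposition~\ref{hatk} cannot be invoked for the used lines $\ell_i$ (it concerns maximal lines, whose existence is precisely what is in question), and the Cayley--Bacharach input requires intersection configurations you have not shown to exist. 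Your attempt should be read as a correct account of why the problem is hard, not as a proof.
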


\noindent The GM conjecture is evident for $n=2.$ Till now it has been confirmed for the degrees
$n=3,4,5$ (see \cite{GM82}, \cite{B90}, \cite{HJZ14}, respectively).

For a generalization of the Gasca-Maeztu conjecture to maximal curves see \cite{HR}.

In the sequel we will make use of the following result of Carnicer and
Gasca concerning the GM conjecture:
\begin{theorem}[\cite{CG03}, Thm. 4.1] \label{CGth}\label{1>3} If the Gasca-Maeztu conjecture  holds for all degrees up to $n,$ then any $GC_n$ set possesses at least three maximal lines. 
\end{theorem}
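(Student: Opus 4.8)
The plan is to argue by induction on $n$, reducing from degree $n$ to degree $n-1$ by deleting a maximal line. The base case $n=1$ is immediate: a $GC_1$ set is a triangle, whose three sides are $2$-node, hence maximal, lines. Note that the hypothesis (the Gasca--Maeztu conjecture up to degree $n$) descends to degree $n-1$, so the inductive hypothesis may be applied to the statement at level $n-1$.

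First I would establish the reduction lemma: if $\ell$ is a maximal line of a $GC_n$ set $\Xset$, then $\Xset_1:=\Xset\setminus\ell$ is a $GC_{n-1}$ set. Indeed, for a node $A\in\Xset_1$ the fundamental polynomial $p_A^\star$ vanishes at the $n+1$ nodes of $\ell$, so by Proposition \ref{prp:n+1ell} we may write $p_A^\star=\ell\, q_A$ with $q_A\in\Pi_{n-1}$; since $p_A^\star$ splits into $n$ linear factors, $q_A$ splits into $n-1$, and $q_A$ is a fundamental polynomial of $A$ for $\Xset_1$. As $\#\Xset_1=N_n-(n+1)=N_{n-1}$, the set $\Xset_1$ is $GC_{n-1}$. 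Next I would record the lifting correspondence. Any maximal line $m\ne\ell$ of $\Xset$ meets $\ell$ at a node (Proposition \ref{hatk}(i)) and carries $n$ further nodes off $\ell$, so $m$ is a maximal line of $\Xset_1$; conversely a maximal line of $\Xset_1$ (with $n$ nodes) is maximal in $\Xset$ exactly when it passes through one of the $n+1$ nodes of $\ell$. Fixing one maximal line $\ell_1$, which exists by the Gasca--Maeztu conjecture in degree $n$, this yields
\[
\#\Mset(\Xset)=1+\#\{\,m\in\Mset(\Xset_1): m\ \text{meets}\ \ell_1\ \text{at a node}\,\}.
\]

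By the inductive hypothesis $\#\Mset(\Xset_1)\ge 3$, so it suffices to show that at least two maximal lines of $\Xset_1$ pass through nodes of $\ell_1$. The main obstacle is exactly this last step, because a maximal line of $\Xset_1$ need not lift: it may meet $\ell_1$ at a non-node or be parallel to it (for instance, in the principal lattice the row adjacent to the deleted side fails to lift, and indeed there only one of the three lower maximal lines fails). To control the failures I would fix a node $A\in\ell_1$ and examine the $n$ factor lines of $p_A^\star$. None passes through $A$, so each meets $\ell_1$ in at most one node, forcing these $n$ lines to pass through the $n$ remaining nodes of $\ell_1$ bijectively while also covering all of $\Xset_1$. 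A first consequence is that a maximal line $m$ of $\Xset_1$ missing every node of $\ell_1$ can be used by no node of $\ell_1$ (otherwise $m$ would be a factor line of some $A\in\ell_1$ and hence would contain a node of $\ell_1$).

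The heart of the argument is then to rule out two such non-lifting maximal lines $m,m'$ coexisting. Assuming they do, the factorization analysis above shows that for \emph{every} node $A\in\ell_1$ its $n$ factor lines thread the nodes of $m$, of $m'$, and of $\ell_1\setminus\{A\}$ bijectively, so each is a common transversal of $m$, $m'$ and $\ell_1$; together with the non-concurrency in Proposition \ref{hatk}(ii), applied in $\Xset_1$, this produces a rigid incidence pattern. I would derive the contradiction by counting the nodes that use a fixed line $\mu$ that is not maximal in $\Xset$: their cofactors $p_D^\star/\mu$ are linearly independent fundamental polynomials in $\Xset\setminus\mu$, bounding their number by $N_{n-1}$, and confronting this bound with the forced transversals gives the incompatibility. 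This counting-plus-rigidity step is the delicate part; once it shows that all but at most one maximal line of $\Xset_1$ threads a node of $\ell_1$, we obtain at least two liftings, whence $\#\Mset(\Xset)\ge 3$ and the induction closes.
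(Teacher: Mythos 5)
The paper itself offers no proof of this theorem --- it is imported verbatim from Carnicer and Gasca \cite{CG03}, so there is nothing in the source to compare your argument against; it has to stand on its own. Your setup is sound: the base case, the reduction lemma ($\Xset_1:=\Xset\setminus\ell_1$ is $GC_{n-1}$), the lifting correspondence between $\Mset(\Xset)\setminus\{\ell_1\}$ and the maximal lines of $\Xset_1$ meeting $\ell_1$ at a node, and the bijection forcing each factor line of $p_A^\star$ ($A\in\ell_1$) to thread exactly one node of each of $\ell_1\setminus\{A\}$, $m$ and $m'$ are all correct. But the entire theorem reduces to the one claim that two maximal lines $m,m'$ of $\Xset_1$ cannot both miss every node of $\ell_1$, and that claim is never proved: at exactly that point the text switches from proof to intention (``I would derive the contradiction\dots'', ``this counting-plus-rigidity step is the delicate part'').

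The sketched mechanism does not close the gap as stated. You never identify which non-maximal line $\mu$ is supposed to be overused; the ``forced transversals'' are factor lines of $p_A^\star$ that vary with $A\in\ell_1$ and need not coincide, so no single line visibly accumulates more than the permitted $N_{n-1}$ users; and the incidence pattern you do establish (each factor line of each $p_A^\star$ meets $m$, $m'$ and $\ell_1\setminus\{A\}$ in one node apiece) is internally consistent --- the analogous configuration with a single non-lifting line is actually realized in the principal lattice --- so no contradiction can follow from it without a further quantitative input. This missing step is precisely the nontrivial content of the Carnicer--Gasca theorem; everything you prove before it is correct but routine, and the induction does not close without it.
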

\begin{corollary} \label{CGcor} Suppose that $\Xset$ is $GC_n$ set and the Gasca-Maeztu conjecture  holds for all degrees up to $n.$ Then any node of $\Xset$ uses a maximal line.
\end{corollary}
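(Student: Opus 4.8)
The plan is to show that every node is \emph{avoided} by at least one maximal line, because a maximal line that misses a node $A$ automatically forces $A$'s fundamental polynomial to be divisible by that line. In particular I do not expect to need induction on $n$, nor the detailed product-of-lines structure of $GC_n$ sets beyond what is already packaged in Theorem~\ref{CGth}; the $GC_n$ hypothesis enters only to guarantee enough maximal lines.

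First I would invoke Theorem~\ref{CGth}: since the Gasca--Maeztu conjecture holds for all degrees up to $n$ and $\Xset$ is a $GC_n$ set, $\Xset$ possesses at least three maximal lines, say $\ell_1,\ell_2,\ell_3$. Fix an arbitrary node $A\in\Xset$. By Proposition~\ref{hatk}(ii) no three maximal lines are concurrent, so $\ell_1,\ell_2,\ell_3$ have no common point; in particular they cannot all pass through $A$. Hence at least one of them, say $\ell$, satisfies $A\notin\ell$.

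Next I would turn this into a divisibility statement. Since $\ell$ is maximal it contains exactly $n+1$ nodes of $\Xset$, and as $A\notin\ell$ all of these lie in $\Xset\setminus\{A\}$. Thus the fundamental polynomial $p_{A,\Xset}^\star\in\Pi_n$ vanishes at $n+1$ points of the line $\ell$, and Proposition~\ref{prp:n+1ell} yields $p_{A,\Xset}^\star=\ell\, r$ with $r\in\Pi_{n-1}$. By the definition of a node using a line, this says precisely that $A$ uses the maximal line $\ell$. Since $A$ was arbitrary, every node of $\Xset$ uses a maximal line.

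The only delicate point, and the one place where the argument could fail if stated carelessly, is securing a maximal line that genuinely misses $A$: a maximal line \emph{through} $A$ forces only $n$ zeros of $p_{A,\Xset}^\star$ on it, one short of what Proposition~\ref{prp:n+1ell} requires. This is exactly why both the existence of three maximal lines (Theorem~\ref{CGth}) and their non-concurrency (Proposition~\ref{hatk}(ii)) are indispensable --- together they rule out the bad scenario in which every maximal line passes through the chosen node. Beyond securing this step I anticipate no real obstacle.
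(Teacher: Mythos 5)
Your argument is correct and is essentially identical to the paper's: Theorem~\ref{CGth} gives three maximal lines, Proposition~\ref{hatk}(ii) guarantees one of them misses the chosen node $A$, and Proposition~\ref{prp:n+1ell} then forces that line to divide $p_{A,\Xset}^\star$. No gaps; your closing remark about why a maximal line through $A$ would not suffice matches the paper's implicit reasoning.
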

Indeed, in view of Theorem \ref{1>3} there exist three maximal lines which, according to Proposition \ref{hatk}, (ii), are not concurrent. Therefore for any node  $A\in\Xset$  there is a maximal line not passing through it. Now, according to Proposition \ref{prp:n+1ell}, the node $A$ uses the latter line.

A \emph{plane algebraic curve} of degree $n,\ n\ge 1,$ is the zero set of some non-zero bivariate polynomial of degree $n.$~To simplify notation, we shall use the same letter,  say $p$,
to denote the polynomial $p$ and the curve given by the equation $p(x,y)=0$.
In particular by $\ell$ we denote a linear
polynomial from $\Pi_1$ and the line defined by the equation
$\ell(x, y)=0.$

Let us mention that in expressions like $\Xset\setminus p,$ or ${\mathcal X \cap p},$ by the polynomial $p\in\Pi_n$ we mean its zero set, that is to say the set $\left\{A : p(A)=0\right\}.$

\begin{definition}
Given an $n$-correct set $\Xset$. We say that a node $A \in \Xset$ uses an algebraic curve $q$ of degree $k\le n,$
if $q$ divides the fundamental polynomial $p_{A, \Xset}^\star:$
$$p_{A, \Xset}^\star= q r, \ \hbox{where} \ r\in\Pi_{n-k}.$$
\end{definition}

\noindent Set  for $n,k\ge 0$
\begin{equation}\label{dnk1}d(n, k) := N_n - N_{n-k}.\end{equation}
Note that for $0\le k\le n+2$ we have that
\begin{equation}\label{dnk2}d(n, k) = (n-k+2)+(n-k+3)+\cdots+(n+1)\ \left[={k(2n+3-k)\over 2}\right].\end{equation}

\noindent While for $k\ge n+1,$ in view of the relation \eqref{n<0}, we have that 
\begin{equation}\label{k>n}d(n, k) = N_n.\end{equation}
Now note that if $0\le k\le \min(m,n)$ then we have that
\begin{equation} \label{nmk1} d(n,m)-d(n,k)=d(n-k,m-k).\end{equation}
Indeed, we have that 
$$d(n,m)-d(n,k)=N_n - N_{n-m}-(N_n - N_{n-k})=N_{n-k}- N_{n-m}=d(n-k,m-k).$$
Then note that if $m\le n$ and $0\le k\le n- m+2$ then we have that
\begin{equation} \label{nmk2}  d(n,m)-mk=d(n-k,m). \end{equation} 
Indeed, we have that 
$$d(n,m)-mk=(n-m+2)+(n-m+3)+\cdots+(n+1)-mk$$ 
$$= (n-m-k+2)+(n-m-k+3)+\cdots+(n-k+1)=d(n-k,m-k).$$

The following is a generalization of Proposition \ref{prp:n+1ell}:
\begin{proposition} [\cite{Raf}, Prop. 3.1]\label{maxcurve}
Let $q$ be an algebraic curve of degree $k \le n$ with no multiple components. Then, the following statements hold.
\begin{enumerate}
\item
Any subset of $q$ containing more than $d(n,k)$ nodes is
$n$-dependent.
\item
Any subset ${\mathcal X}$ of $q$ containing exactly $d(n,k)$ nodes is $n$-independent if and only if
$$\quad p\in {\Pi_{n}}\ \text{and}\ \ p|_{{\mathcal X}} = 0 \implies  p = qr,\ \hbox{where}\ r \in \Pi_{n-k}.$$
\end{enumerate}
\end{proposition}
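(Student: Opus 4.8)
The plan is to reduce both parts to a single dimension count for the restriction map, the key object being the subspace of polynomials divisible by $q$. For a finite set $\Yset\subseteq q$ let $R\colon\Pi_n\to\mathbb{R}^{\Yset}$ be the restriction map $R(p)=(p(A))_{A\in\Yset}$, and let $W:=\ker R=\{p\in\Pi_n:p|_{\Yset}=0\}$. Two facts drive everything. First, put $V:=q\,\Pi_{n-k}=\{qr:r\in\Pi_{n-k}\}$; each such product has degree at most $k+(n-k)=n$, so $V\subseteq\Pi_n$, and since $q\not\equiv0$ and $\Pi$ has no zero divisors, the map $r\mapsto qr$ is injective, giving $\dim V=\dim\Pi_{n-k}=N_{n-k}$. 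Second, every $p\in V$ vanishes on the whole zero set of $q$, hence on $\Yset$, so $V\subseteq W$. Combining these, $\dim(\operatorname{im}R)=N_n-\dim W\le N_n-\dim V=N_n-N_{n-k}=d(n,k)$.

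For part (i), assume $\#\Yset>d(n,k)$. Then $\dim(\operatorname{im}R)\le d(n,k)<\#\Yset=\dim\mathbb{R}^{\Yset}$, so $R$ is not surjective. By the proposition following the Lagrange formula \eqref{Lagrange}, $\Yset$ is $n$-independent exactly when it is $n$-solvable, i.e. when $R$ is surjective; hence $\Yset$ is $n$-dependent. For $k=1$ this is precisely the statement, following Proposition \ref{prp:n+1ell}, that more than $n+1$ nodes of an $n$-independent set cannot be collinear.

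For part (ii), take $\Xset\subseteq q$ with $\#\Xset=d(n,k)$ and set $W=\{p\in\Pi_n:p|_{\Xset}=0\}$. Then $\Xset$ is $n$-independent iff $R$ is surjective, iff $\dim(\operatorname{im}R)=d(n,k)$, iff $\dim W=N_n-d(n,k)=N_{n-k}$. Because $V\subseteq W$ and $\dim V=N_{n-k}$, the equality $\dim W=N_{n-k}$ is equivalent to $W=V$; and as $V\subseteq W$ holds unconditionally, $W=V$ is equivalent to the single inclusion $W\subseteq V$. But $W\subseteq V$ is verbatim the asserted implication ``$p\in\Pi_n$ and $p|_{\Xset}=0\implies p=qr$ with $r\in\Pi_{n-k}$''. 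Chaining these equivalences gives the characterization.

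The step I would scrutinize most is the role of the hypothesis that $q$ has no multiple components. The argument above uses only that $q$ is a nonzero polynomial, through the identity $\dim V=N_{n-k}$; the multiplicity hypothesis enters neither equivalence. What it secures is the sharpness and non-vacuity of the result, namely that $q$ actually carries $n$-independent subsets of the maximal size $d(n,k)$, so that case (ii) is not empty (for instance a double line $q=\ell^2$ has $d(n,2)=2n+1>n+1$, yet its zero set is a single line and can hold at most $n+1$ independent nodes). I would therefore keep the purely algebraic core separate from any attainability claim, and verify against \cite{Raf} whether ``no multiple components'' is needed only for the latter or is a standing convention on the curves under consideration.
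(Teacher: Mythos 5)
The paper states this proposition only as a cited result from \cite{Raf} and supplies no proof of its own, so there is no in-text argument to compare against. Your proof is correct and is the standard one underlying the cited result: the inclusion $q\,\Pi_{n-k}\subseteq\ker R$ together with $\dim\left(q\,\Pi_{n-k}\right)=N_{n-k}$ bounds the rank of the restriction map by $d(n,k)$, and the equivalence of $n$-independence with $n$-solvability (surjectivity of $R$) then yields both (i) and the two-way dimension comparison in (ii); your side observation that the no-multiple-components hypothesis is not used in these equivalences but only guarantees that the bound $d(n,k)$ is actually attained on $q$ (cf.\ Proposition \ref{HT2}) is also accurate.
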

For lines we have that $d(n, 1) = n+1$ and any $n+1$ points on a line are $n$-independent.

For conics (algebraic curves of degree $2$) we have that $d(n, 2) = 2n+1$. It is well-known that a set $\Xset$ of $2n+1$ points is $n$-independent
if and only if it has no $n+2$ collinear points (see \cite{Hak00} for the case of multiplicities). If $\Xset$ is a subset of an irreducible conic then at most $2$ points of $\Xset$ can lie on the same line. 
Thus any set of $2n+1$ points
located on an irreducible conic is $n$-independent.

For cubics (algebraic curves of degree $3$) and curves of higher degrees things are more complicated. In particular not any set of $d(n, 3) = 3n$ nodes located on a cubic is $n$-independent (see \cite{HM12}).
  
In the sequel we will need the following 
\begin{proposition}[\cite{HT}, Prop. 3.5] \label{HT2} Let $q$ be an algebraic curve of degree $k \le n$ with no multiple components, and $\Xset_s\subset q$ be an $n$-independent  node set  of cardinality $s,\ s<d(n,k).$ Then $\Xset_s$ can be enlarged till a maximal $n$-independent set  $\Xset_d\subset q,$ of cardinality $d=d(n,k).$ 
\end{proposition}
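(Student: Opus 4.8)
The plan is to construct $\Xset_d$ from $\Xset_s$ by adjoining one node at a time, at each step staying on $q$ and preserving $n$-independence, until the cardinality reaches $d(n,k)$. The engine of the induction is the following elementary extension principle: if $\Yset$ is an $n$-independent set and there is a polynomial $p\in\Pi_n$ with $p|_{\Yset}=0$ and $p(A)\neq 0$ for some node $A\notin\Yset$, then $\Yset\cup\{A\}$ is again $n$-independent. Indeed, the normalization $p/p(A)$ serves as the fundamental polynomial of $A$, while the fundamental polynomial $p_B^\star$ of each $B\in\Yset$ is repaired by passing to $p_B^\star-\tfrac{p_B^\star(A)}{p(A)}\,p$, which still vanishes on $\Yset\setminus\{B\}$, equals $1$ at $B$, and now vanishes at $A$ as well.

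Thus it suffices to prove: if $\Xset_j\subset q$ is $n$-independent with $j<d(n,k)$, then there exist $A\in q\setminus\Xset_j$ and $p\in\Pi_n$ with $p|_{\Xset_j}=0$ and $p(A)\neq 0$. I would obtain $p$ by a dimension count. Set $V:=\{p\in\Pi_n:\ p|_{\Xset_j}=0\}$; since $\Xset_j$ is $n$-independent its $j$ evaluation functionals are linearly independent, so $\dim V=N_n-j$. Inside $V$ sits the subspace $W:=q\,\Pi_{n-k}=\{qr:\ r\in\Pi_{n-k}\}$, whose members vanish on $q$ and hence on $\Xset_j$; as multiplication by the nonzero polynomial $q$ is injective, $\dim W=N_{n-k}$. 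Because $j<d(n,k)=N_n-N_{n-k}$, we have $\dim V=N_n-j>N_{n-k}=\dim W$, so $W\subsetneq V$ strictly and I may pick $p\in V\setminus W$. By construction $p|_{\Xset_j}=0$, while $p\notin W$ says exactly that $q$ does not divide $p$.

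It then remains to locate the new node on $q$. Here the hypothesis that $q$ has no multiple components is essential: writing $q$ as a product of distinct irreducible factors, the relation $q\nmid p$ forces some factor $q_0$ with $q_0\nmid p$, so $p$ and $q_0$ are coprime and, by B\'ezout's theorem, share only finitely many zeros. Hence $p$ is nonzero at all but finitely many points of the component $q_0$, and since $\Xset_j$ is finite I can choose $A$ on $q_0\subset q$ with $p(A)\neq 0$ and $A\notin\Xset_j$. The extension principle now gives that $\Xset_j\cup\{A\}\subset q$ is $n$-independent of cardinality $j+1$. Iterating this step $d(n,k)-s$ times yields an $n$-independent set $\Xset_d\subset q$ with $\#\Xset_d=d(n,k)$, which is maximal on $q$ by Proposition \ref{maxcurve}, (i).

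The step I expect to be the main obstacle is the last one, converting the algebraic fact $q\nmid p$ into the existence of a genuine \emph{new real} node $A\in q$ with $p(A)\neq 0$. Over $\mathbb{R}$ the passage from ``$p$ does not vanish on $q$ as a polynomial'' to ``$p$ is nonzero at some real point of $q$'' is not automatic, since a real component could carry only finitely many (or no) real points; the argument above is clean only once one knows that the witnessing factor $q_0$ can be taken to be a real component with infinitely many points, so the careful point is to justify that the available components of $q$ are rich enough in real nodes for the choice of $A$ to succeed at every stage up to $d(n,k)$.
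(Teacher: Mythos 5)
Your argument is correct and is essentially the proof of the cited source \cite{HT} (the present paper only quotes the result without proof): the same dimension count $\dim\{p\in\Pi_n:\ p|_{\Xset_j}=0\}=N_n-j>N_{n-k}=\dim\left(q\,\Pi_{n-k}\right)$ produces a $p$ vanishing on $\Xset_j$ but not divisible by $q$, and B\'ezout applied to an irreducible factor $q_0\nmid p$ yields a fresh node of $q$ at which $p\neq 0$, after which your extension principle and Proposition \ref{maxcurve}\,(i) finish the induction. The caveat you flag at the end is genuine but is a defect of the statement rather than of your proof: over $\mathbb{R}$ one must, as this literature implicitly does, assume every irreducible component of $q$ carries infinitely many real points, since otherwise the conclusion itself fails --- for $q=(x^2+y^2+1)\ell$ the real zero set is a line, on which at most $n+1$ points are $n$-independent, while $d(n,3)=3n$.
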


\section{Maximal curves}

\noindent Proposition \ref{maxcurve} implies that at most $d(n,k)$ $n$-independent nodes lie in any curve $q$ of degree $k \le n$. This motivates the following
\begin{definition}\label{def:maximal}[\cite{Raf}, Def. 3.1]
Let $\Xset$ be an $n$-correct set. A curve $f$ of degree $k \le n$ passing through  $d(n,k)$ nodes
of $\mathcal X$ is called \emph{maximal curve.}
\end{definition}
Since $d(n,n)=N-1$ we get that each fundamental polynomial of $\Xset$ is a maximal curve of degree $n.$

Note that evidently the curve $f$ of Definition \ref{def:maximal} has no multiple components.

The following proposition gives a characterization of the maximal curves:
\begin{proposition}[\cite{Raf}, Prop. 3.3] \label{maxcor}
Let $\Xset$ be an $n$-correct set. Then
a curve $f\in\Pi$ of degree $k,\ k\le n,$ is a maximal curve for $\Xset$ if and only if 
it is used by any node of the set $\Xset\setminus f.$
\end{proposition}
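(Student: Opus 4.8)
The plan is to prove the two implications separately, in each case splitting $\Xset$ into the nodes lying on $f$ and those off it. Write $\Xset_f:=\Xset\cap f$, $\Yset:=\Xset\setminus f$ and $s:=\#\Xset_f$; the whole argument will revolve around relating a degree-$n$ fundamental polynomial of a node $A\in\Yset$ in $\Xset$ to a degree-$(n-k)$ fundamental polynomial of $A$ in $\Yset$.

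For the forward implication I would assume $f$ maximal, so that $s=d(n,k)$ and, by the remark after Definition \ref{def:maximal}, $f$ has no multiple components. Fixing $A\in\Yset$, I would note that $\Xset_f\subseteq\Xset\setminus\{A\}$ because $A\notin f$, so $p_{A,\Xset}^\star$ vanishes on all $d(n,k)$ nodes of $\Xset_f$. Since $\Xset_f$ is $n$-independent (a subset of the $n$-correct set $\Xset$) and has exactly $d(n,k)$ points, Proposition \ref{maxcurve}(ii) forces $p_{A,\Xset}^\star=fr$ with $r\in\Pi_{n-k}$, that is, $A$ uses $f$. This being true for every $A\in\Yset$ gives one direction.

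For the converse I would assume every $A\in\Yset$ uses $f$, say $p_{A,\Xset}^\star=f r_A$ with $r_A\in\Pi_{n-k}$, and prove $s=d(n,k)$ by two opposite inequalities. The lower bound is the crux: I would check that each $r_A$ is an $(n-k)$-fundamental polynomial for $A$ relative to $\Yset$. Indeed, for $B\in\Yset\setminus\{A\}$ the identity $0=p_{A,\Xset}^\star(B)=f(B)r_A(B)$ together with $f(B)\neq0$ gives $r_A(B)=0$, while $1=p_{A,\Xset}^\star(A)=f(A)r_A(A)$ with $f(A)\neq0$ gives $r_A(A)\neq0$. Hence $\Yset$ is $(n-k)$-independent, so $\#\Yset\le N_{n-k}$, i.e. $s\ge N_n-N_{n-k}=d(n,k)$. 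For the reverse inequality I would pass to the reduced curve $\tilde f$ (the product of the distinct irreducible factors of $f$), which has the same nodes, degree $\tilde k\le k$ and no multiple components, and apply Proposition \ref{maxcurve}(i) to the $n$-independent set $\Xset_f$ to get $s\le d(n,\tilde k)\le d(n,k)$, the last step because $d(n,\cdot)$ is strictly increasing on $\{0,\dots,n\}$ by \eqref{dnk2}. The two bounds give $s=d(n,k)$, and incidentally $\tilde k=k$, so $f$ has no multiple components and is maximal.

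The step I expect to carry the weight is recognizing that ``$A$ uses $f$'' demotes $A$'s fundamental polynomial by $k$ degrees, converting a divisibility hypothesis into the cardinality bound $\#\Yset\le N_{n-k}$ via the elementary bound on independent sets. The only genuine technical nuisance is the upper bound in the converse: Proposition \ref{maxcurve}(i) presupposes no multiple components, which is not given a priori for the hypothesized $f$, so I must first reduce to $\tilde f$ and recover square-freeness of $f$ only at the end. I anticipate no obstacle beyond this bookkeeping.
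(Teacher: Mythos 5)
Your proof is correct and follows essentially the route the paper indicates: both directions rest on Proposition \ref{maxcurve}, with the converse obtained by the counting argument that the paper compresses into its one-line reference to the Lagrange formula \eqref{Lagrange} (your observation that $\Xset\setminus f$ is $(n-k)$-independent, hence of cardinality at most $N_{n-k}$, is the same idea in dual form). Your extra care with multiple components via the reduced curve $\tilde f$ is a sound way to meet the hypothesis of Proposition \ref{maxcurve}, which the paper dispatches with the remark following Definition \ref{def:maximal}.
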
 

Note that this, in view of the Lagrange formula \eqref{Lagrange}, follows directly from Proposition \ref{maxcurve}. 

Proposition \ref{maxcor} and the subsequent Corollary \ref{maxtim}, due to L. Rafayelyan \cite{Raf}, reveal the primary significance of maximal curves in the theory of bivariate polynomial interpolation. 

\begin{corollary}[\cite{Raf}, Prop. 3.4] \label{maxtim}  Let $\Xset$ be an $n$-correct set. Then we have that 
\begin{equation}\label{maxmax}
f\in\Pi_k \hbox{ is a maximal curve}  \iff \Xset\setminus f  \hbox{ is an $(n-k)$-correct set.}\end{equation} 
\end{corollary}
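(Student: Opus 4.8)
The plan is to reduce the biconditional to the vanishing criterion for correctness furnished by Proposition~\ref{correctii}, and to let multiplication by $f$ transfer information between the degree-$n$ and degree-$(n-k)$ settings. First I record the bookkeeping that underlies both implications. If $f$ is a maximal curve of degree $k$, then by Definition~\ref{def:maximal} exactly $d(n,k)$ nodes of $\Xset$ lie on $f$; since $\#\Xset=N_n$ and $d(n,k)=N_n-N_{n-k}$ by \eqref{dnk1}, the complement satisfies $\#(\Xset\setminus f)=N_{n-k}=\dim\Pi_{n-k}$. Thus the cardinality of $\Xset\setminus f$ already matches the necessary count for $(n-k)$-correctness, and by Proposition~\ref{correctii} it then remains only to verify the implication $p\in\Pi_{n-k},\ p|_{\Xset\setminus f}=0\Rightarrow p=0$.

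For the forward implication, assume $f$ is maximal and take $p\in\Pi_{n-k}$ with $p|_{\Xset\setminus f}=0$. Consider the product $fp$: its degree is at most $k+(n-k)=n$, so $fp\in\Pi_n$, and it vanishes on all of $\Xset$, namely on the nodes of $\Xset\cap f$ because $f$ vanishes there and on the nodes of $\Xset\setminus f$ because $p$ vanishes there. Since $\Xset$ is $n$-correct, Proposition~\ref{correctii}(iii) forces $fp=0$; as $\Pi$ is an integral domain and $f\neq 0$, we conclude $p=0$, so $\Xset\setminus f$ is $(n-k)$-correct. Read through fundamental polynomials this is exactly Proposition~\ref{maxcor}: if $A\in\Xset\setminus f$ uses $f$, so that $p_{A,\Xset}^\star=fr$ with $r\in\Pi_{n-k}$, then $r$ vanishes on $\Xset\setminus f$ off $A$ and $r(A)=1/f(A)\neq 0$, i.e.\ $r$ is an $(n-k)$-fundamental polynomial for $A$ there.

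For the reverse implication, assume $\Xset\setminus f$ is $(n-k)$-correct, so $\#(\Xset\setminus f)=N_{n-k}$ and hence $\#(\Xset\cap f)=N_n-N_{n-k}=d(n,k)$. By Definition~\ref{def:maximal} this says precisely that $f$ is a maximal curve, provided $f$ genuinely has degree $k$. Pinning down the degree is the step that needs care, and I regard it as the main (mild) obstacle, since a priori $f\in\Pi_k$ only guarantees $\deg f\le k$. However, the $d(n,k)$ nodes of $\Xset\cap f$ are $n$-independent, being a subset of the $n$-correct set $\Xset$, whereas Proposition~\ref{maxcurve}(i) permits at most $d(n,k')$ independent nodes on a curve of degree $k'$; since $d(n,\cdot)$ is strictly increasing on $\{0,\dots,n\}$ by \eqref{dnk2}, a degree $k'<k$ would contradict $\#(\Xset\cap f)=d(n,k)$. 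Hence $\deg f=k$ and $f$ is a maximal curve, which completes the equivalence.
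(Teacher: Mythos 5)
Your proof is correct, but your forward implication runs through a different mechanism than the paper's. The paper proves $(\Rightarrow)$ by invoking Proposition~\ref{maxcor}: every node $A\in\Xset\setminus f$ uses $f$, so writing $p^\star_{A,\Xset}=f\,q_A$ and observing that $q_A$ is an $(n-k)$-fundamental polynomial for $A$ in $\Bset=\Xset\setminus f$ exhibits the full system of fundamental polynomials, whence $\Bset$ is $(n-k)$-independent and, by the cardinality count, $(n-k)$-correct. You instead verify criterion (iii) of Proposition~\ref{correctii} directly: any $p\in\Pi_{n-k}$ vanishing on $\Xset\setminus f$ gives $fp\in\Pi_n$ vanishing on all of $\Xset$, so $fp=0$ and hence $p=0$ by the integral-domain property. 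The two arguments are essentially dual (one divides fundamental polynomials by $f$, the other multiplies test polynomials by $f$), but yours is more self-contained, since it bypasses Proposition~\ref{maxcor} and the machinery of Proposition~\ref{maxcurve} behind it; the paper's version is shorter given that machinery and has the bonus of producing the fundamental polynomials of $\Bset$ explicitly, which is what later corollaries (e.g.\ Corollary~\ref{maxhq}) actually exploit. Your $(\Leftarrow)$ direction is the same cardinality count as the paper's, and your extra care in pinning down $\deg f=k$ addresses a point the paper silently assumes; just note that to apply Proposition~\ref{maxcurve}(i) to a hypothetical lower-degree $f$ you should first pass to its square-free part, since that proposition is stated for curves with no multiple components.
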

For the sake of completeness we present
\begin{proof} 
 $(\Leftarrow)$ 
Denote by $\Bset:=\Xset\setminus f.$ We have that
\begin{equation}\label{nnk}N_{n-k}=\#\Bset=N_n-\#(\Xset\cap f).\end{equation}
From here we obtain that $\#(\Xset\cap f)=N_n-N_{n-k}=d(n,k),$ i.e., $f$ is a maximal curve of degree $k.$

$(\Rightarrow)$
In view of  Proposition \ref{maxcor}  for any $A\in \Xset\setminus f$ we have that
$$p^\star_{A,\Xset} = q_Af,\ \hbox{where}\ q\in\Pi_{n-k}.$$
This implies that
$q_A= p^\star_{A,\Bset}.$ On the other hand we have that $\#\Bset=N-d(n,k)=N_{n-k}.$ Therefore, in view of Proposition \ref{correctii}, the set $\Bset$ is $(n-k)$-correct.
\end{proof} 

Now, we obtain  

\begin{corollary}\label{maxhq} Suppose that $\Xset$ is an $n$-correct set, $f$ and $fh$ are
maximal curves, where   $\deg f=k$ and $\deg h=m.$
Then 
the curve $h$ is a maximal curve for the $(n-k)$-correct set $\Xset\setminus f.$
\end{corollary}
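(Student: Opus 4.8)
The plan is to reduce the statement to a node count combined with Corollary~\ref{maxtim} and the arithmetic identity \eqref{nmk1}. First note that since $fh$ is a maximal curve, its degree $k+m$ satisfies $k+m\le n$, and in particular $m\le n-k$. Applying Corollary~\ref{maxtim} to the maximal curve $f$ of degree $k$ shows that $\Bset:=\Xset\setminus f$ is an $(n-k)$-correct set. By Definition~\ref{def:maximal} it therefore suffices to prove that the curve $h$, which has degree $m\le n-k$, passes through exactly $d(n-k,m)$ nodes of $\Bset$.

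To carry out the count I would pass to zero sets. A node $A\in\Xset$ lies on $fh$ exactly when $f(A)=0$ or $h(A)=0$; splitting according to whether or not $A$ lies on $f$ produces the disjoint decomposition
\begin{equation*}
\Xset\cap fh=(\Xset\cap f)\sqcup\bigl((\Xset\setminus f)\cap h\bigr)=(\Xset\cap f)\sqcup(\Bset\cap h).
\end{equation*}
Consequently $\#(\Bset\cap h)=\#(\Xset\cap fh)-\#(\Xset\cap f)$. Because $fh$ is a maximal curve of degree $k+m$ and $f$ is a maximal curve of degree $k$, the two terms on the right equal $d(n,k+m)$ and $d(n,k)$, respectively, whence
\begin{equation*}
\#(\Bset\cap h)=d(n,k+m)-d(n,k).
\end{equation*}

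The final step is to invoke \eqref{nmk1} with its parameter $m$ taken to be $k+m$: its hypothesis $0\le k\le\min(k+m,n)$ holds here since $m\ge 0$ and $k\le n$, and it then gives $d(n,k+m)-d(n,k)=d(n-k,(k+m)-k)=d(n-k,m)$. Thus $h$ passes through exactly $d(n-k,m)$ nodes of the $(n-k)$-correct set $\Bset$, which is precisely the assertion that $h$ is a maximal curve for $\Bset$.

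I do not expect a serious obstacle here; the argument is essentially bookkeeping. The only points that genuinely require care are verifying that the decomposition of $\Xset\cap fh$ is truly disjoint, so that a node lying on both $f$ and $h$ is not counted twice, and checking that the index constraints of \eqref{nmk1} are satisfied. I note in passing that although $fh$, being maximal, has no multiple components and hence $f$ and $h$ share no common component, the set-theoretic count above does not rely on this fact.
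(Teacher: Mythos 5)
Your proof is correct, but it takes a genuinely different route from the paper's. The paper argues through fundamental polynomials: by Proposition~\ref{maxcor} applied to the maximal curve $fh$, every node $A\in\Xset\setminus(fh)$ has $p_{A,\Xset}^\star=fhr_A$; dividing by $f$ yields $p_{A,\Yset}^\star=hr_A$ for $\Yset=\Xset\setminus f$, and a second application of Proposition~\ref{maxcor} (now in $\Yset$) shows $h$ is maximal there. You instead work directly with Definition~\ref{def:maximal} as a pure node count: the disjoint decomposition $\Xset\cap fh=(\Xset\cap f)\sqcup(\Bset\cap h)$ together with the maximality of $f$ and $fh$ gives $\#(\Bset\cap h)=d(n,k+m)-d(n,k)=d(n-k,m)$ via \eqref{nmk1}, and Corollary~\ref{maxtim} supplies the $(n-k)$-correctness of $\Bset$. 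Your decomposition is indeed disjoint by construction (a node on both $f$ and $h$ is assigned to $\Xset\cap f$ only), and the index check for \eqref{nmk1} is fine, so the bookkeeping closes. The trade-off: your argument is more elementary and, as you note, indifferent to whether $f$ and $h$ share components, whereas the paper's divisibility argument additionally exhibits the fundamental polynomials of $\Yset$ explicitly as $hr_A$, which is the form reused later (e.g., in Corollary~\ref{maxtim2} and Proposition~\ref{prop7}).
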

\begin{proof}
For any $A\in \Xset\setminus (fh)$ we have that 
\begin{equation} \label{hgqa} p_{A,\Xset}^\star = fhr_A,\end{equation} 
where $r_A\in\Pi_{n-m-k}.$

Denote $\Yset:=\Xset\setminus f.$ 
Then, from \eqref{hgqa} we readily obtain that
$$p_{A,\Yset}^\star = hr_A\ \forall A\in\Yset\setminus h.$$
This, in view of Proposition \ref{maxcor}, completes the proof.
\end{proof}

\begin{corollary}\label{maxtim2} Suppose that $\Xset$ is an $n$-correct set, $f_1$ and $f_2$ are
maximal curves, with no common components. Suppose also that  $\deg f_i=k_i,\ i=1,2,$ where $k_1+k_2\le n.$ Then: 
\begin{enumerate}
\item
The curve $f_1f_2$ is a maximal curve of degree $k_1+k_2.$
\item
The curve $f_2$ is a maximal curve for the $(n-k_1)$-correct set $\Xset\setminus f_1.$
\end{enumerate}
\end{corollary}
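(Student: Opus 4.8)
The plan is to prove (i) directly from the characterization of maximal curves in Proposition \ref{maxcor}, working entirely with fundamental polynomials, and then to obtain (ii) as an immediate consequence of (i) together with Corollary \ref{maxhq}.

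For (i), I would fix an arbitrary node $A \in \Xset \setminus (f_1 f_2)$ and note that then $A \notin f_1$ and $A \notin f_2$. Since $f_1$ is a maximal curve, Proposition \ref{maxcor} says it is used by every node of $\Xset \setminus f_1$, in particular by $A$, so $f_1 \mid p_{A,\Xset}^\star$; likewise $f_2 \mid p_{A,\Xset}^\star$. The key algebraic input is that $f_1$ and $f_2$, having no common component, are coprime in the polynomial ring, which is a unique factorization domain; hence $f_1 f_2 \mid p_{A,\Xset}^\star$. Because $\deg(f_1 f_2) = k_1 + k_2 \le n$, this gives $p_{A,\Xset}^\star = f_1 f_2 r_A$ with $r_A \in \Pi_{n-k_1-k_2}$, i.e.\ $A$ uses the curve $f_1 f_2$. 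As $A$ was arbitrary in $\Xset \setminus (f_1 f_2)$, the criterion of Proposition \ref{maxcor} shows that $f_1 f_2$ is a maximal curve of degree $k_1 + k_2$, establishing (i).

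For (ii), I would simply invoke Corollary \ref{maxhq} with $f = f_1$ and $h = f_2$: the hypotheses there require $f$ and $fh$ to be maximal curves, which hold by assumption and by part (i), respectively. Its conclusion is exactly that $f_2$ is a maximal curve for the $(n-k_1)$-correct set $\Xset \setminus f_1$, the $(n-k_1)$-correctness of this set being recorded in Corollary \ref{maxtim}.

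I expect no serious obstacle; the only point requiring care is the passage from ``$f_1 \mid p_{A,\Xset}^\star$ and $f_2 \mid p_{A,\Xset}^\star$'' to ``$f_1 f_2 \mid p_{A,\Xset}^\star$'', where one must use precisely the hypothesis that $f_1$ and $f_2$ share no component, since this is what makes them coprime and legitimizes the appeal to unique factorization. As a cross-check one can argue by counting instead: because $\Xset$ is $n$-independent, Proposition \ref{maxcurve} bounds $\#(\Xset \cap f_1 f_2) \le d(n,k_1+k_2)$, while inclusion--exclusion together with the Bézout bound $\#(\Xset \cap f_1 \cap f_2) \le k_1 k_2$ and the identity $d(n,k_1)+d(n,k_2)-k_1 k_2 = d(n,k_1+k_2)$ (immediate from \eqref{nmk1} and \eqref{nmk2}) yields the reverse inequality, again forcing $f_1 f_2$ to be maximal.
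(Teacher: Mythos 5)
Your proof is correct and follows essentially the same route as the paper: both establish (i) by showing that every node $A\in\Xset\setminus(f_1\cup f_2)$ uses each $f_i$, hence (by coprimality of $f_1$ and $f_2$) their product, and then apply Proposition \ref{maxcor}; and both derive (ii) from Corollary \ref{maxhq}. Your only addition is to spell out the unique-factorization justification for the step $f_1\mid p^\star_{A,\Xset}$, $f_2\mid p^\star_{A,\Xset}\implies f_1f_2\mid p^\star_{A,\Xset}$, which the paper leaves implicit.
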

\begin{proof}
Note first that $\deg f_2 =k_2\le n-k_1.$ Then any node $A\in \Xset\setminus (f_1\cup f_2)$ uses both curves $f_1$ and $f_2.$ Since the latter curves have no common components we obtain that
\begin{equation}\label{astgm}p_{A,\Xset}^\star = f_1 f_2 r, \ \hbox{where}\ r\in\Pi_{n-k_1-k_2}.\end{equation}
This, in view of  Proposition \ref{maxcor}, implies (i).

Now (ii) readily follows from Proposition \ref{maxhq}.
\end{proof} Let us mention that Corollary \ref{maxtim2} (ii) was proved in (\cite{Raf}, Prop. 3.4).
\begin{remark} \label{rem} In view of the relation \eqref{astgm} one readily gets that if  $f_1$ and $f_2$ are
maximal curves, with no common components, and  $\deg f_i=k_i,\ i=1,2,$ where $k_1+k_2\ge n+1,$ then $\Xset\subset f_1\cup f_2.$
\end{remark}
 \section{Intersection of two maximal curves}
Denote by $\Iset(f_1,f_2)$ the set of intersection points of the curves $f_1$ and $f_2.$
Denote also by $\Iset_\Xset(f_1,f_2):=\Iset(f_1,f_2)\cap \Xset.$

\begin{proposition}[\cite{Raf}, Prop. 3.4]\label{prop5}
Suppose that $\Xset$ is an $n$-correct set, $f_1$ and $f_2$ are
maximal curves, with no common components. Suppose also that  $\deg f_i=k_i,\ i=1,2,$ where 
\begin{equation} \label{k1k2+} k_1+k_2\le n.\end{equation}  
Then the curves $f_1$ and $f_2$ intersect at exactly $k_1k_2$ distinct points, which all are nodes of $\Xset:$
$$\#\Iset_\Xset(f_1,f_2) =k_1k_2.$$
\end{proposition}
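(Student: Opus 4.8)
The plan is to combine a node-counting argument with B\'ezout's theorem, squeezing the cardinality of $\Iset(f_1,f_2)$ between two matching bounds. First I would record the purely set-theoretic inclusion--exclusion identity
$$\#\Iset_\Xset(f_1,f_2)=\#(\Xset\cap f_1)+\#(\Xset\cap f_2)-\#(\Xset\cap(f_1\cup f_2)),$$
using that $\Iset_\Xset(f_1,f_2)=\Xset\cap f_1\cap f_2$. The three terms on the right are then evaluated by maximality: since $f_1$ and $f_2$ are maximal curves, $\#(\Xset\cap f_i)=d(n,k_i)$; and since the zero set of $f_1f_2$ is exactly $f_1\cup f_2$ while, by Corollary \ref{maxtim2}(i), $f_1f_2$ is a maximal curve of degree $k_1+k_2$, we get $\#(\Xset\cap(f_1\cup f_2))=d(n,k_1+k_2)$.

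Next I would verify the arithmetic identity $d(n,k_1)+d(n,k_2)-d(n,k_1+k_2)=k_1k_2$. This follows directly from the relations established earlier: applying \eqref{nmk1} with $m=k_1+k_2$ and $k=k_1$ gives $d(n,k_1+k_2)-d(n,k_1)=d(n-k_1,k_2)$, while applying \eqref{nmk2} with $m=k_2$ and $k=k_1$ gives $d(n,k_2)-d(n-k_1,k_2)=k_1k_2$ (the hypotheses $k_2\le n$ and $k_1\le n-k_2+2$ both hold because $k_1+k_2\le n$). Subtracting these and substituting into the inclusion--exclusion identity yields the exact count
$$\#\Iset_\Xset(f_1,f_2)=k_1k_2.$$

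Finally I would close the argument with B\'ezout's theorem. Because $f_1$ and $f_2$ have no common components, they meet in at most $k_1k_2$ distinct points, that is, $\#\Iset(f_1,f_2)\le k_1k_2$. Since $\Iset_\Xset(f_1,f_2)\subseteq\Iset(f_1,f_2)$, we obtain the chain
$$k_1k_2=\#\Iset_\Xset(f_1,f_2)\le\#\Iset(f_1,f_2)\le k_1k_2,$$
which forces every inequality to be an equality. Hence $f_1$ and $f_2$ intersect in exactly $k_1k_2$ distinct points and all of them are nodes of $\Xset$, as asserted.

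The only genuine subtlety is the invocation of B\'ezout over $\mathbb{R}$: one needs the bound on the number of \emph{distinct} intersection points---valid precisely because $f_1$ and $f_2$ share no component---rather than the count with multiplicities. Everything else is bookkeeping, since the inclusion--exclusion is automatic and the identity $d(n,k_1)+d(n,k_2)-d(n,k_1+k_2)=k_1k_2$ is a one-line consequence of \eqref{nmk1} and \eqref{nmk2}. I note finally that the node count already produces at least $k_1k_2$ real intersection points, so B\'ezout is needed only for the upper bound; as a byproduct it also guarantees that each of the $k_1k_2$ intersections is simple.
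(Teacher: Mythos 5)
Your proof is correct and follows essentially the same route as the paper: inclusion--exclusion on $\Xset\cap(f_1\cup f_2)$, maximality of $f_1f_2$ via Corollary \ref{maxtim2}(i), and the identities \eqref{nmk1} and \eqref{nmk2} to reduce $d(n,k_1)+d(n,k_2)-d(n,k_1+k_2)$ to $k_1k_2$. The only difference is that you make explicit the final B\'ezout squeeze showing that \emph{all} intersection points are nodes and are distinct, a step the paper leaves implicit; this is a welcome clarification rather than a departure.
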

Let us present a short
\begin{proof} In view of Corollary \ref{maxtim2}, (i), and the relations \eqref{nmk1} and \eqref{nmk2} , we have that

\noindent $\#\Iset_\Xset(f_1,f_2)=\#\left\{f_1\cap \Xset\right\}+\#\left\{f_2\cap \Xset\right\}-\#\left\{f_1f_2\cap \Xset\right\} = d(n,k_1)+d(n,k_2)-d(n,k_1+k_2)=d(n,k_1)-d(n-k_2,k_1)= k_1 k_2.$
\end{proof}

\begin{figure}
\begin{center}
\includegraphics[width=4.0cm,height=4.cm]{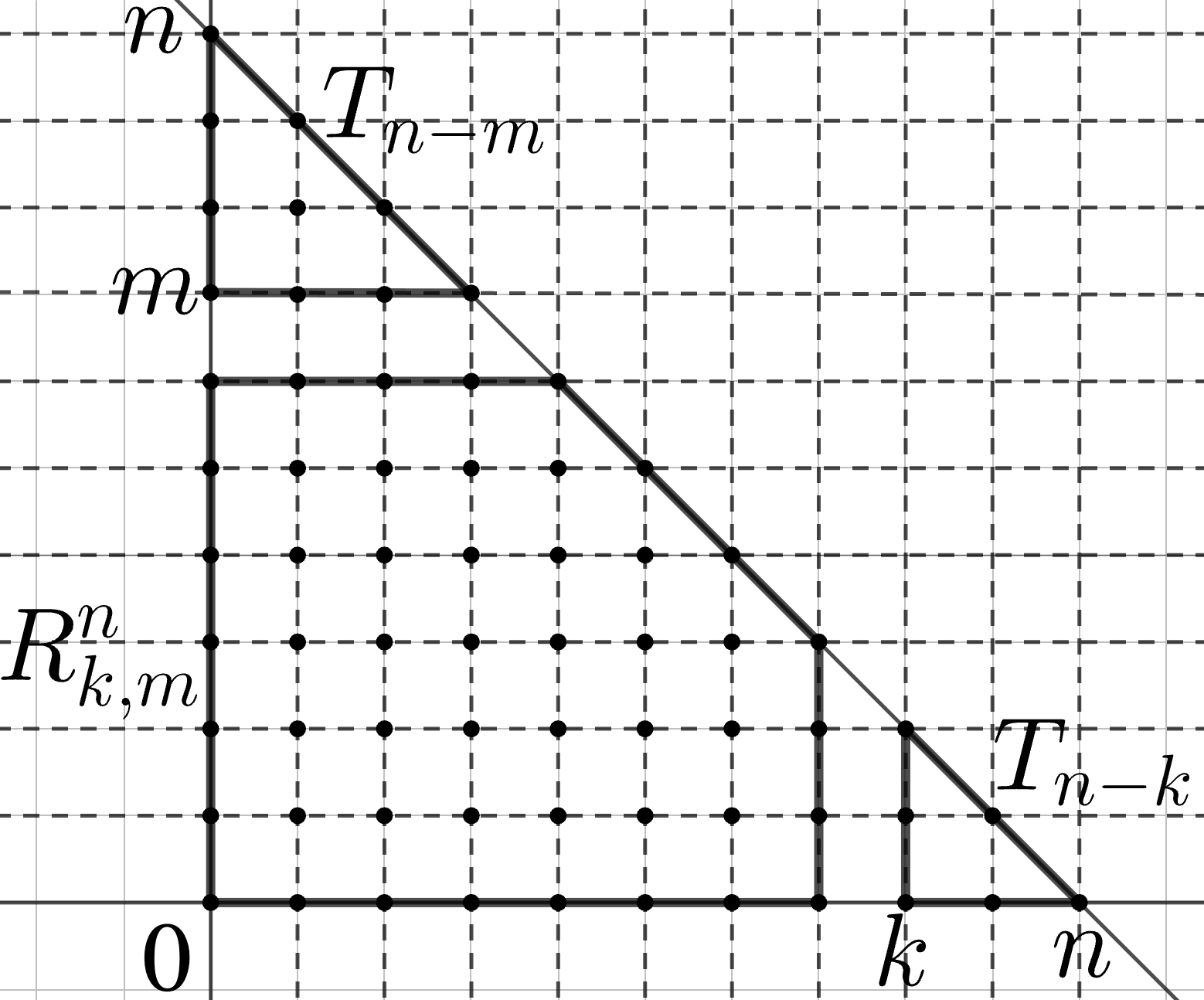}
\end{center}
\caption{The set $R_{k,m}^n$ for $k+m\ge n+3.$} \label{Fig1}
\end{figure}

Denote the following tringular lattice of dimension $n$ by
$$T_n:=T^{i_0,j_0}_n=\left\{(i+i_0,j+j_0) : i,j\ge 0,\ i+j\le n\right\},$$ 
where $i_0,j_0\ge 0.$

Note that 
$$N_n=\#T_n.$$ 
Denote also the following
rectangular lattice of dimension $k\times m$ by
$$ R_{k,m}:=\left\{(i,j) : 0\le i\le k-1, 0\le j\le m-1\right\}.$$ 
Now, set (see Fig. \ref{Fig1})
$$ R_{k,m}^n:=R_{k,m} \cap T^{0,0}_n=\left\{(i,j) : 0\le i\le k-1, 0\le j\le m-1, i+j\le n\right\}.$$
Next, set
$$H_{k,m}^n:=\#R_{k,m}^n.$$
Note that if $k + m\le n+2$ then  $R_{k,m}\subset T^{0,0}_n$ (see Fig. \ref{Fig2}) and hence
$$ R_{k,m}^n= R_{k,m}.$$
Therefore we obtain that
\begin{equation}\label{zs0}H_{k,m}^n =km \ \hbox{if}\ k + m\le n+2.\end{equation}
Now, it can be easily verified that 
\begin{equation}\label{zsss} 
H_{k,m}^n = N_n-N_{n-k}-N_{n-m} \ \hbox{if}\  k + m\ge n+1.
\end{equation}
Indeed, this readily follows from  the following disjoint representation of  $T^{0,0}_n$ (see Fig. \ref{Fig1}):
$$T^{0,0}_n=H_{k,m}^n\cup T^{k,0}_{n-k}\cup T^{0,m}_{n-m}.$$ 

\begin{figure}
\begin{center}
\includegraphics[width=4.0cm,height=4.cm]{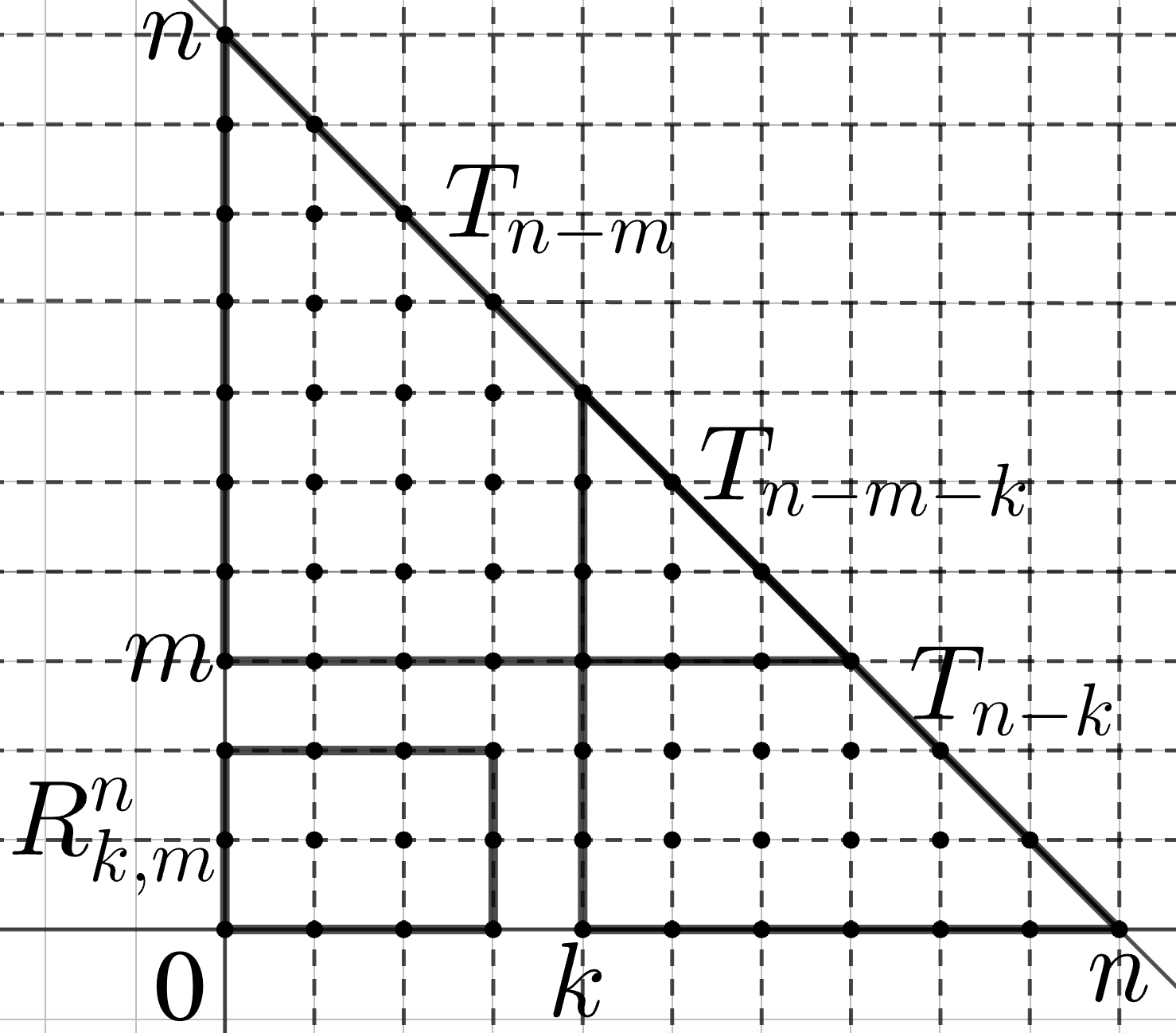}
\end{center}
\caption{The set $R_{k,m}^n$ for $k+m\le n+2.$} \label{Fig2}
\label{Fig2}
\end{figure}

Then, consider the following simple relation: 
$$(k-1,m-1)\notin R_{k,m}^n \ \iff \ k + m\ge n+3.$$
This readily implies that
\begin{equation}\label{zs} H_{k,m}^n \le km-1 \ \iff \ k + m\ge n+3.
\end{equation}

Next, in the case $k + m\le n,$ the following relation takes place:
\begin{equation}\label{zsss2} H_{k,m}^n = N_n-N_{n-k}-N_{n-m}+N_{n-k-m}. 
\end{equation}
Indeed, it follows from the following equality (see Fig. \ref{Fig2})
$$\#H_{k,m}^n=\#T^{0,0}_n-\# T^{k,0}_{n-k}-\#T^{0,m}_{n-m}+\#T^{k,m}_{n-k-m}.$$
Let us mention that the relation \eqref{zsss2} holds in all cases. In particular the relation \eqref{zsss} is a special case of \eqref{zsss2}, since $N_s=0,$ if $s<0.$

Now, suppose that $f_1$ and $f_2$ are algebraic curves of degrees $k_1$ and $k_2,$ respectively, intersecting at exactly $k_1k_2$ distinct points:
$\#\Iset(f_1,f_2)=k_1k_2.$ 
Then, according to the Cayley-Bacharach theorem, the cardinality of maximal $n$-independent subsets of $\Iset(f_1,f_2)$ equals to $H_{k_1,k_2}^n.$   The latter quantity is called the \emph{Hilbert function of the set $\Iset(f_1,f_2)$ for the degree $n.$}

Thus, the above mentioned curves $f_1$ and $f_2$ may have at most
$H_{k_1,k_2}^n$ intersection points inside an $n$-correct set $\Xset.$

Now suppose that $ k_1 + k_2\ge n+3.$ Then, in view of  \eqref{zs}, no curves $f_1$ and $f_2,$ of degrees $k_1$ and $k_2,$ respectively, can have $k_1k_2$ intersection points inside an $n$-correct set $\Xset.$

Next proposition, which extends Proposition \ref{prop5}, asserts that for  maximal curves the above mentioned extremal case takes place.

\begin{proposition}\label{prop6}
Suppose that $\Xset$ is an $n$-correct set, $f_1$ and $f_2$ are
maximal curves, with no common components. Suppose also that  $\deg f_i=k_i,\ i=1,2.$ Then the curves $f_1$ and $f_2$ intersect at exactly $H_{k_1,k_2}^n$ nodes of $\Xset:$
$$\#\Iset_\Xset(f_1,f_2) =H_{k_1,k_2}^n.$$
\end{proposition}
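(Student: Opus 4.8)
The plan is to split the proof into two complementary regimes according to the sizes of $k_1$ and $k_2$, and to reduce each regime to results already established in the excerpt. Recall that $H_{k_1,k_2}^n$ coincides with $k_1k_2$ precisely when $k_1+k_2\le n+2$ (by \eqref{zs0}), and with $N_n-N_{n-k_1}-N_{n-k_2}$ when $k_1+k_2\ge n+1$ (by \eqref{zsss}), the two formulas agreeing on the overlap. Thus I would first dispose of the easy range $k_1+k_2\le n$, where Proposition \ref{prop5} already gives $\#\Iset_\Xset(f_1,f_2)=k_1k_2=H_{k_1,k_2}^n$. The genuinely new content is the range $k_1+k_2\ge n+1$, and there the target count is $N_n-N_{n-k_1}-N_{n-k_2}$, which is exactly the inclusion–exclusion count $\#(\Xset\cap f_1)+\#(\Xset\cap f_2)-\#\Xset$ provided one can argue that $\Xset\subset f_1\cup f_2$.

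First I would record the inclusion–exclusion identity that always holds for the node counts, namely
\begin{equation}\label{incexc}
\#\Iset_\Xset(f_1,f_2)=\#(\Xset\cap f_1)+\#(\Xset\cap f_2)-\#\bigl(\Xset\cap(f_1\cup f_2)\bigr).
\end{equation}
Since $f_1,f_2$ are maximal curves we have $\#(\Xset\cap f_1)=d(n,k_1)=N_n-N_{n-k_1}$ and likewise $\#(\Xset\cap f_2)=N_n-N_{n-k_2}$. The whole proof therefore hinges on computing the last term $\#\bigl(\Xset\cap(f_1\cup f_2)\bigr)$. In the regime $k_1+k_2\ge n+1$ I would invoke Remark \ref{rem}, which states exactly that under this hypothesis $\Xset\subset f_1\cup f_2$; hence $\#\bigl(\Xset\cap(f_1\cup f_2)\bigr)=\#\Xset=N_n$, and substituting into \eqref{incexc} yields
$$\#\Iset_\Xset(f_1,f_2)=(N_n-N_{n-k_1})+(N_n-N_{n-k_2})-N_n=N_n-N_{n-k_1}-N_{n-k_2}=H_{k_1,k_2}^n,$$
the last equality being \eqref{zsss}. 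This settles the case $k_1+k_2\ge n+1$ cleanly.

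It remains to reconcile the two arguments on the overlap and to handle the intermediate band $n+1\le k_1+k_2$, which is already covered above, together with $k_1+k_2\le n$, covered by Proposition \ref{prop5}; since every pair $(k_1,k_2)$ falls into at least one of the two ranges $\{k_1+k_2\le n\}$ and $\{k_1+k_2\ge n+1\}$, the two cases exhaust all possibilities and the proof is complete. The one subtlety I would check carefully is the boundary behaviour: when $n+1\le k_1+k_2\le n+2$ both formulas for $H_{k_1,k_2}^n$ are available, and I would verify that $N_n-N_{n-k_1}-N_{n-k_2}=k_1k_2$ there, so that the Remark-based count is consistent with the Proposition \ref{prop5}-based count and no node is double-counted or missed. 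The main obstacle is not any hard computation but rather ensuring that Remark \ref{rem} genuinely applies in the full stated range (in particular that $f_1$ and $f_2$ having no common components, as hypothesized, is what licenses the factorization \eqref{astgm} underlying the Remark); once that is in hand the result follows immediately from inclusion–exclusion and the arithmetic identities \eqref{zs0}–\eqref{zsss}.
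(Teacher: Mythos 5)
Your proposal is correct and follows essentially the same route as the paper: the same case split at $k_1+k_2\le n$ versus $k_1+k_2\ge n+1$, invoking Proposition \ref{prop5} with \eqref{zs0} in the first case and Remark \ref{rem} plus inclusion--exclusion with \eqref{zsss} in the second. Your extra check that the two formulas for $H_{k_1,k_2}^n$ agree on the overlap $n+1\le k_1+k_2\le n+2$ is a sound consistency verification but not needed, since the two cases are disjoint and exhaustive.
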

\begin{proof} If $k_1+k_2\le n$ then, in view of \eqref{zs0}, the statement follows from Proposition \ref{prop5}.
  
Now, suppose that $k_1+k_2 \ge n+1$. Then, in view of Remark \ref{rem} we have that 
\begin{equation*} 
	\Xset \subset f_1 \cup f_2.
\end{equation*}
This implies that
\begin{equation*}\#\Iset_\Xset(f_1,f_2) = \#\left\{f_1\cap \Xset\right\}+\#\left\{f_2\cap \Xset\right\}-\#\left\{\Xset\right\} \end{equation*}
$$= d(n,k_1)+d(n,k_2)- N= N-\left(N-d(n,k_1)\right)-(N-d(n,k_2))$$
$$=N-N_{n-k_1}- N_{n-k_2}=H_{k_1,k_2}^n.$$
The last equality here follows from \eqref{zsss}.
\end{proof}

\begin{remark} \label{rem1} Note that, in view of Proposition \ref{prop6} and the relation \eqref{zs0}, we obtain that Proposition \ref{prop5} holds under a weaker than \eqref{k1k2+} restriction 
$k_1+k_2\le n+2$ (see \cite{Raf}, Prop. 3.4, 2).  Also note that, as we mentioned earlier, Proposition \ref{prop5} is not valid if $k_1+k_2\ge n+3.$\end{remark} 

\begin{remark} Note that $\Xset_0:=T^{0,0}_n$ is an $n$-correct set. Consider the curves $f(x,y)=x(x-1)\cdots (x-k+1)$ and $g(x,y)=y(y-1)\cdots (y-m+1),$ which are maximal for 
$\Xset_0$ of degrees $k$ and $m,$ respectively. Then we have that (see Fig. \ref{Fig1} and \ref{Fig2})
$$f \cap g\cap \Xset_0=R_{k,m}^n.$$
\end{remark}

Below we consider the intersection of two maximal curves which have a common component. Next proposition asserts, in particular, that the greatest common divisor of such maximal curves itself is a maximal curve.

\begin{proposition}\label{prop7}
Suppose that $\Xset$ is an $n$-correct set, $f_1=hg_1$ and $f_2=hg_2$ are any 
maximal curves, whose greatest common divisor is $h.$ Suppose also that  $\deg h=m$ and $\deg g_i=s_i,\ i=1,2,$ where 
\begin{equation} \label{skm} s :=s_1+s_2+m\le n.
\end{equation}
Then the following statements hold.
\begin{enumerate}
\item
The curve $g_1g_2h$ is a maximal curve of degree $s.$
\item
The curve $h$ is a maximal curve of degree $m.$
\item
The curves $g_1$ and $g_2$ are maximal curves of degrees $s_1$ and $s_2,$ respectively, for the $(n-m)$-correct set $\Xset\setminus h.$
\item
$\#\Iset_\Xset(g_1,g_2) =s_1 s_2$ and $h\cap g_1\cap g_2\cap \Xset=\emptyset.$
\item
The curves $f_1$ and $f_2$ intersect at exactly $d(n,m)+s_1 s_2$ nodes of $\Xset:$
\begin{equation}\label{k_1k_2}\#\Iset_\Xset(f_1,f_2) =d(n,m)+s_1 s_2.\end{equation}
\end{enumerate}
\end{proposition}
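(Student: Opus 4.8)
The plan is to establish the five assertions in the order (i), then (ii) together with (iv), then (iii), and finally (v), relying on the characterization of maximal curves in Proposition~\ref{maxcor}, the peeling device of Corollary~\ref{maxhq}, and one inclusion--exclusion count. I first record the structural facts used throughout: since $f_1,f_2$ are maximal curves they have no multiple component, so $h,g_1,g_2$ are pairwise without common component (the coprimality of $g_1$ and $g_2$ being exactly the hypothesis $\gcd(f_1,f_2)=h$), and each of $h$ and $g_1g_2h=f_1g_2=f_2g_1$ is free of multiple components; hence Propositions~\ref{maxcurve} and~\ref{maxcor} apply to them. I abbreviate $H:=\Xset\cap h$, $G_1:=\Xset\cap g_1$, $G_2:=\Xset\cap g_2$.

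For (i) I argue directly. Take $A\in\Xset\setminus(g_1g_2h)$; then $A$ lies on neither $f_1$ nor $f_2$, so by Proposition~\ref{maxcor} it uses both, giving $p_{A,\Xset}^\star=hg_1r=hg_2r'$. Cancelling $h$ yields $g_1r=g_2r'$, and coprimality of $g_1,g_2$ forces $g_2\mid r$, whence $p_{A,\Xset}^\star=hg_1g_2\tilde r$ with $\tilde r\in\Pi_{n-s}$. Since $s\le n$, Proposition~\ref{maxcor} shows that $g_1g_2h$ is a maximal curve of degree $s$.

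The crux, and the step I expect to be the main obstacle, is (ii). The naive attempt to verify that every node off $h$ uses $h$ (via Proposition~\ref{maxcor}) breaks down exactly at the nodes of $G_1\cap G_2$ lying off $h$: such a node sits on both $f_1$ and $f_2$, hence uses neither, and it also lies on the maximal curve $g_1g_2h$, so none of the usage arguments applies to it. I therefore avoid the usage criterion and prove maximality of $h$ through its \emph{node count} (Definition~\ref{def:maximal}), by a squeeze that simultaneously delivers (iv). The maximality of $f_1$, of $f_2$, and of $g_1g_2h$ (from (i)) gives $\#(H\cup G_1)=d(n,m+s_1)$, $\#(H\cup G_2)=d(n,m+s_2)$ and $\#(H\cup G_1\cup G_2)=d(n,s)$. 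Since $(H\cup G_1)\cap(H\cup G_2)=H\cup(G_1\cap G_2)$, inclusion--exclusion together with the identities \eqref{dnk1}, \eqref{zsss2} and, because $s_1+s_2=s-m\le n-m$, the relation \eqref{zs0} give $\#\!\left(H\cup(G_1\cap G_2)\right)=d(n,m+s_1)+d(n,m+s_2)-d(n,s)=d(n,m)+s_1s_2$. On the other hand $\#H\le d(n,m)$ by Proposition~\ref{maxcurve}(i) (as $h$ has degree $m\le n$ with no multiple component), and $\#(G_1\cap G_2)\le s_1s_2$ by B\'ezout's theorem (as $g_1,g_2$ share no component). Hence $d(n,m)+s_1s_2=\#H+\#(G_1\cap G_2)-\#(H\cap G_1\cap G_2)\le \#H+\#(G_1\cap G_2)\le d(n,m)+s_1s_2$, and equality throughout forces $H\cap G_1\cap G_2=\emptyset$, $\#H=d(n,m)$ and $\#(G_1\cap G_2)=s_1s_2$. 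This proves (ii) (so $h$ is maximal) and (iv).

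With (ii) available, (iii) is immediate: by Corollary~\ref{maxtim} the set $\Xset\setminus h$ is $(n-m)$-correct, and applying the peeling Corollary~\ref{maxhq} to the pair of maximal curves $h$ and $f_i=hg_i$ shows that $g_i$ is a maximal curve of degree $s_i$ for $\Xset\setminus h$, for $i=1,2$ (one may alternatively re-derive (iv) by applying Proposition~\ref{prop5} inside the $(n-m)$-correct set $\Xset\setminus h$ to $g_1,g_2$, since $s_1+s_2\le n-m$). Finally (v) is pure bookkeeping: as point sets $f_1\cap f_2=h\cup(g_1\cap g_2)$, so $\Iset_\Xset(f_1,f_2)=H\cup(G_1\cap G_2)$, and the count already obtained yields $\#\Iset_\Xset(f_1,f_2)=d(n,m)+s_1s_2$, which is \eqref{k_1k_2}. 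The only external input beyond the paper's toolkit is the B\'ezout bound $\#(G_1\cap G_2)\le s_1s_2$ used in the squeeze for (ii); everything else is internal.
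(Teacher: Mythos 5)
Your proof is correct, and for the decisive step it takes a genuinely different route from the paper. Both proofs establish (i) identically and both use the same squeeze $\#\left(H\cup(G_1\cap G_2)\right)=\#H+\#(G_1\cap G_2)-\#(H\cap G_1\cap G_2)\le d(n,m)+s_1s_2$ (with B\'ezout and Proposition~\ref{maxcurve}(i)) to extract (ii) and (iv) once the total is pinned down. The difference is how the exact value $d(n,m)+s_1s_2$ is obtained. The paper only derives the upper bound from that chain and then proves the matching lower bound by contradiction: assuming $\#\Iset_\Xset(f_1,f_2)<d(n,m)+s_1s_2$, it passes to the $(n-m-s_1)$-correct set $\Xset\setminus f_1$ and shows $g_2$ would carry more than $d(n-m-s_1,s_2)$ of its nodes, violating Proposition~\ref{maxcurve}(i). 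You instead get the exact count in one stroke by inclusion--exclusion over the three maximal curves $f_1$, $f_2$ and $g_1g_2h$, i.e.\ $\#\Iset_\Xset(f_1,f_2)=d(n,m+s_1)+d(n,m+s_2)-d(n,s)$, which via \eqref{zsss2} and \eqref{zs0} equals $d(n,m)+s_1s_2$; this is exactly the mechanism of the paper's own proof of Proposition~\ref{prop5}, transported to the reduced product $g_1g_2h$ in place of $f_1f_2$. Your version is shorter, avoids the contradiction argument and the auxiliary set $\Xset\setminus f_1$ entirely, and makes explicit that (i) is the engine driving (v); the paper's argument, on the other hand, isolates the counting principle (``a curve of degree $s_2$ cannot carry more than $d(n-m-s_1,s_2)$ independent nodes'') that generalizes beyond this exact-count setting. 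Your handling of (iii) via Corollary~\ref{maxtim} and Corollary~\ref{maxhq} coincides with the paper's, and your preliminary remark explaining why the usage criterion of Proposition~\ref{maxcor} cannot prove (ii) directly is accurate.
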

\begin{proof} Suppose that $A\in\Xset\setminus g_1g_2h.$ Then we have $f_1(A)f_2(A)\neq 0.$ Hence both $f_1$ and $f_2$ divide $P^\star_{A,\Xset}.$
This readily implies that $g_1g_2h$ divides $P^\star_{A,\Xset}.$ Therefore, in view of Proposition \ref{maxcor}, we get (i).

Next, we have that
$$\#\Iset_\Xset (f_1,f_2)=\#\Iset_\Xset (hg_1,hg_2)= \#(\Xset\cap h)+\#\Iset_{\Xset} (g_1,g_2) -\#(h\cap g_1\cap g_2\cap \Xset))$$ $$\le \#(\Xset\cap h)+\#\Iset_{\Xset}(g_1,g_2)\le d(n,m)+\#\Iset_{\Xset}(g_1,g_2)\le d(n,m)+s_1 s_2.$$
Let us mention that in the last inequality we used the Bezout theorem.

Now note that if the equality cases hold in the above three inequalities, or, in other words, if \eqref{k_1k_2} holds, then we easily conclude that $\#(h\cap g_1\cap g_2\cap \Xset)=0$ (the first inequality), $h$ is a maximal curve of degree $m$ (the second inequality), and $\#\Iset_\Xset(g_1,g_2) =s_1 s_2$ (the third inequality). Thus, (iv) and (ii) will be proved.

Moreover,  by applying Corollary \ref{maxhq}, we would obtain (iii) too, since $hg_1=f_1$ and $hg_2=f_2$ are maximal curves. 

\noindent Therefore it suffices to prove only the equality \eqref{k_1k_2}, i.e., statement (v).

Recall that the following inequality was established above:
$$\#\Iset_\Xset(f_1,f_2) \le d(n,m)+s_1 s_2.$$

Finally suppose, by way of contradiction, that the equality \eqref{k_1k_2}, does not hold. Then in view of the above inequality we obtain that
\begin{equation} \label{nm+}\#\Iset_\Xset(f_1,f_2) <d(n,m)+s_1 s_2.\end{equation}

Next, in view of Corollary  \ref{maxtim}, we have that $\Yset:=\Xset\setminus f_1$ is an $(n-m-s_1)$-correct set.
The curve $f_2$ is maximal curve of degree $m+s_2$ and passes through
$d(n,m+s_2)$ nodes of $\Xset.$ Thus $f_2$ passes through $d(n,s_2+m)-\Iset_\Xset(f_1,f_2)$ nodes of $\Yset.$ On the other hand, in view of \eqref{nm+},  \eqref{nmk1} and \eqref{nmk2}, we have that 
$$d(n,s_2+m)-\Iset_\Xset(f_1,f_2)>d(n,m+s_2)-d(n,m)-s_1 s_2=d(n-m,s_2)-s_1 s_2$$
$$=d(n-m-s_1,s_2)$$ Hence, $f_2$ passes through more than $d(n-m-s_1,s_2)$
nodes of $\Yset.$

Next we have that $h(A)\neq 0\ \ \forall A\in \Yset.$ Therefore, by taking into account the equality $f_2=h g_2,$ we obtain that the curve $g_2$ of degree $s_2$ passes through more than $d(n-m-s_1,s_2)$ nodes of $\Yset$ too. In view of \eqref{skm} this is contradiction.
Thus (v) is proved. \end{proof}

Now, we present an extension of Proposition \ref{prop7}, where the restriction \eqref{skm} is removed.
\begin{proposition}\label{prop8}
Suppose that $\Xset$ is an $n$-correct set, $f_1=hg_1$ and $f_2=hg_2$ are any 
maximal curves, whose greatest common divisor is $h.$ Suppose also that  $\deg h=m,\hbox{and}\ \deg g_i=s_i,\ i=1,2.$ Then the %following statements hold:
%\begin{enumerate}
%\item
%$h\cap q_1\cap q_2\cap \Xset=\emptyset.$
%\item
%The curve $h$ is a maximal curve of degree $m.$
%\item
%The curves $q_1$ and $q_2$ are maximal curves of degrees $k_1$ and $k_2,$ respectively, for the $(n-m)$-correct set $\Xset\setminus h.$
%\item
curves $f_1$ and $f_2$ intersect at exactly 
\begin{equation}\label{nm++}\#\Iset_\Xset(f_1,f_2) =d(n,m)+H_{s_1,s_2}^{n-m}\end{equation} nodes of $\Xset.$
%\end{enumerate}
\end{proposition}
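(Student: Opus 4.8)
The plan is to turn the whole statement into one inclusion–exclusion count together with an arithmetic identity, isolating the hypothesis that is being removed, namely \eqref{skm}, into a single case distinction at the very end.

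First I would set $X_i:=\Xset\cap f_i$ and observe that, as zero sets, $f_1\cup f_2$ coincides with the zero set of $hg_1g_2$, because $f_1f_2=h^2g_1g_2$ and the squared factor does not enlarge the zero locus. Since $X_1\cap X_2=\Iset_\Xset(f_1,f_2)$ and $X_1\cup X_2=\Xset\cap hg_1g_2$, inclusion–exclusion yields
$$\#\Iset_\Xset(f_1,f_2)=\#(\Xset\cap f_1)+\#(\Xset\cap f_2)-\#(\Xset\cap hg_1g_2).$$
As $f_1,f_2$ are maximal curves of degrees $m+s_1$ and $m+s_2$, the first two terms are $d(n,m+s_1)$ and $d(n,m+s_2)$. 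Everything thus reduces to showing that, in all cases,
$$\#(\Xset\cap hg_1g_2)=d(n,m+s_1+s_2).$$

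This is the step I expect to be the main obstacle, since it is the only place where the absence of \eqref{skm} forces a change of mechanism. If $m+s_1+s_2\le n$, then Proposition \ref{prop7}(i) applies verbatim: $hg_1g_2$ is a maximal curve of degree $m+s_1+s_2$, so it carries exactly $d(n,m+s_1+s_2)$ nodes. If instead $m+s_1+s_2\ge n+1$, I would argue by contradiction that $\Xset\subseteq f_1\cup f_2$. Indeed, any node $A$ with $f_1(A)\ne0$ and $f_2(A)\ne0$ uses both maximal curves by Proposition \ref{maxcor}, so $f_1\mid p^\star_{A,\Xset}$ and $f_2\mid p^\star_{A,\Xset}$; since $\gcd(f_1,f_2)=h$ gives $\gcd(g_1,g_2)=1$ and hence $\mathrm{lcm}(f_1,f_2)=hg_1g_2$, this forces $hg_1g_2\mid p^\star_{A,\Xset}\in\Pi_n$, which is impossible because $\deg(hg_1g_2)=m+s_1+s_2>n$. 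Consequently no such node exists, $\Xset\subseteq f_1\cup f_2$, and $\#(\Xset\cap hg_1g_2)=\#\Xset=N=d(n,m+s_1+s_2)$ by \eqref{k>n}.

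It then remains to verify the arithmetic identity
$$d(n,m+s_1)+d(n,m+s_2)-d(n,m+s_1+s_2)=d(n,m)+H_{s_1,s_2}^{n-m}.$$
Since $f_1$ is maximal of degree $m+s_1\le n$ we have $m\le n$, so \eqref{nmk1} may be applied three times to extract a common $d(n,m)$, reducing the left-hand side to $d(n,m)+d(n-m,s_1)+d(n-m,s_2)-d(n-m,s_1+s_2)$. Expanding each term by \eqref{dnk1} turns the trailing combination into $N_{n-m}-N_{n-m-s_1}-N_{n-m-s_2}+N_{n-m-s_1-s_2}$, which is precisely $H_{s_1,s_2}^{n-m}$ by the all-cases identity \eqref{zsss2}; this completes the proof. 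As a consistency check, when $m+s_1+s_2\le n$ the relation \eqref{zs0} gives $H_{s_1,s_2}^{n-m}=s_1s_2$, so the formula recovers Proposition \ref{prop7}(v).
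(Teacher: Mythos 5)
Your proposal is correct and follows essentially the same route as the paper: the same case split on $m+s_1+s_2$ versus $n$, the same divisibility argument showing $\Xset\subset f_1\cup f_2$ when $m+s_1+s_2\ge n+1$, and the same inclusion--exclusion plus the identities \eqref{nmk1}, \eqref{zsss2}. The only (harmless) difference is presentational: you unify both cases by writing the third term as $\#(\Xset\cap hg_1g_2)=d(n,m+s_1+s_2)$ — via Proposition \ref{prop7}(i) in the small-degree case and via \eqref{k>n} in the large-degree case — where the paper simply cites Proposition \ref{prop7}(v) for the first case and uses $\#\Xset=N$ directly in the second.
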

\begin{proof} %We have that

%In the last inequality we used the Cayley-Bacharach theorem.
%Note that if equality holds here, i.e., \eqref{nm+} takes place, then we get that  $h$ is a maximal curve of degree $m$ and $h\cap q_1\cap q_2\cap \Xset=\emptyset,$ or (i) and (ii) are proved.

The case $s_1+s_2+m\le n$ is considered in Proposition \ref{prop7}. Now suppose that 
\begin{equation} \label{k1k2m} s_1+s_2+m\ge n+1.
\end{equation}

First let us verify that 
\begin{equation}\label{n+1}
	\Xset \subset g_1 \cup g_2 \cup h = f_1\cup f_2.
\end{equation} Indeed, assume conversely that $A\in \Xset\setminus g_1g_2h.$
In the proof of Proposition \ref{prop7} we verified that then $g_1g_2h$ divides $P^\star_{A,\Xset},$ which, in view of \eqref{k1k2m}, is contradiction.
  
Now, in view of \eqref{n+1}, \eqref{nmk1}, and \eqref{dnk1}, we obtain that
$$\#\Iset_\Xset(f_1,f_2) = \#\left\{f_1\cap \Xset\right\}+\#\left\{f_2\cap \Xset\right\}-\#\left\{\Xset\right\}$$ $$= d(n,m+s_1)+d(n,m+s_2)- N
= d(n,m)+d(n-m,s_1)+d(n,m+s_2)- N$$ $$= d(n,m)+d(n-m,s_1)- N_{n-m-s_2}
= d(n,m)+N_{n-m}-N_{n-m-s_1}- N_{n-m-s_2}$$ $$= d(n,m)+H_{s_1,s_2}^{n-m}.$$
In the last equality above we used the relation \eqref{zsss}. 
%The proof of (iii) is identical with the proof of Proposition \ref{prop7}, (iv).
\end{proof}

\begin{remark} \label{rem2} Note that, in view of equality \eqref{nm++} and the relation \eqref{zs0}, we obtain that the statement (v) of Proposition \ref{prop7}, holds under a weaker than \eqref{skm} restriction: 
$$k_1+k_2+m\le n+2.$$ 
Moreover, given the proof of Proposition \ref{prop7}, we obtain that statements (ii-iv) of Proposition \ref{prop7} are also valid under the same weaker restriction.\end{remark} 
\section{Intersection of three maximal curves}
\begin{proposition}[\cite{Raf}, Prop. 3.4, 4] \label{prop10}
Suppose that $\Xset$ is an $n$-correct set and $f_1, f_2$ and $f_3$ are
three maximal curves, such that every two of them have no common components.  
Suppose also that $deg f_i=k_i, \ i=1,2,3,$ and $$\sigma:=k_1+k_2+k_3-(n+2)\le 0.$$ Then we have that
$$f_1\cap f_2\cap f_3=\emptyset.$$
\end{proposition}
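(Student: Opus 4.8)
The plan is to reduce this three-curve statement to the two-curve intersection count by merging two of the curves into a single maximal curve. First I would relabel so that $k_1\le k_2\le k_3$ and record the arithmetic consequence of the hypothesis $\sigma\le 0$, namely $k_1+k_2+k_3\le n+2$, which in particular gives $k_i+k_j\le n+2$ for every pair. By Proposition \ref{prop5} in its extended form (Remark \ref{rem1}), each pair of the curves then meets in exactly $k_ik_j$ distinct points, all of which are nodes of $\Xset$. This last fact is what will let me pass freely between the set-theoretic intersection $f_i\cap f_j$ and the node set $\Iset_\Xset(f_i,f_j)$, and in particular it gives $\#\Iset_\Xset(f_1,f_3)=k_1k_3$ and $\#\Iset_\Xset(f_2,f_3)=k_2k_3$.

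Next I would split according to whether $k_3=1$ or $k_3\ge 2$. If $k_3=1$ all three curves are maximal lines and the conclusion is precisely Proposition \ref{hatk}(ii) (three maximal lines are not concurrent). So assume $k_3\ge 2$; I claim this forces $k_1+k_2\le n$, since otherwise $k_1+k_2\ge n+1$ would give $k_3\le (n+2)-(k_1+k_2)\le 1$, contradicting $k_3\ge 2$. With $k_1+k_2\le n$ secured, Corollary \ref{maxtim2}(i) shows that $g:=f_1f_2$ is a maximal curve of degree $k_1+k_2$, and since $f_3$ shares no component with $f_1$ or with $f_2$ it shares none with $g$.

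Now I would apply Proposition \ref{prop5} (Remark \ref{rem1}) to the maximal curves $g$ and $f_3$: since $(k_1+k_2)+k_3=k_1+k_2+k_3\le n+2$, we get $\#\Iset_\Xset(g,f_3)=(k_1+k_2)k_3$. On the other hand a node lies on $g$ and on $f_3$ exactly when it lies on $f_3$ and on $f_1$ or $f_2$, so $\Iset_\Xset(g,f_3)=\Iset_\Xset(f_1,f_3)\cup\Iset_\Xset(f_2,f_3)$, and inclusion--exclusion gives $\#\Iset_\Xset(g,f_3)=k_1k_3+k_2k_3-\#\left(f_1\cap f_2\cap f_3\cap\Xset\right)$. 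Comparing the two expressions forces $\#\left(f_1\cap f_2\cap f_3\cap\Xset\right)=0$. Finally, because $k_2+k_3\le n+2$ the whole intersection satisfies $f_2\cap f_3\subseteq\Xset$, whence $f_1\cap f_2\cap f_3=f_1\cap(f_2\cap f_3)=f_1\cap f_2\cap f_3\cap\Xset=\emptyset$, as required.

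The step I expect to be the main obstacle is the degree bookkeeping around the merging move: Corollary \ref{maxtim2}(i) only produces a genuine maximal curve (of degree $\le n$) when $k_1+k_2\le n$, and this can fail on the boundary $\sum k_i=n+2$, for instance for two maximal lines together with a maximal curve of degree $n$. The device that circumvents this is to merge the two curves of smallest degree, which keeps $k_1+k_2\le n$ in every case except when all three curves are lines; and that remaining all-lines case is exactly the concurrency statement already recorded in Proposition \ref{hatk}(ii).
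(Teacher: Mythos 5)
Your proof is correct, but it takes a different route from the paper's. The paper argues by \emph{removal}: by Corollary \ref{maxtim} the set $\Yset_1:=\Xset\setminus f_1$ is $(n-k_1)$-correct, $f_2$ and $f_3$ remain maximal there with $k_2+k_3\le (n-k_1)+2$, so Proposition \ref{prop5} together with Remark \ref{rem1}, applied inside $\Yset_1$, places all of $f_2\cap f_3$ in $\Xset\setminus f_1$, and the conclusion is immediate with no counting at all. You instead argue by \emph{merging}: you form the maximal curve $g=f_1f_2$ via Corollary \ref{maxtim2}(i) and compare the count $\#\Iset_\Xset(g,f_3)=(k_1+k_2)k_3$ with the inclusion--exclusion tally $k_1k_3+k_2k_3-\#(f_1\cap f_2\cap f_3\cap\Xset)$, then use $f_2\cap f_3\subseteq\Xset$ to upgrade the empty node-intersection to an empty set-intersection. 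Each step checks out: the inequalities $k_i+k_j\le n+1$, the implication $k_3\ge 2\Rightarrow k_1+k_2\le n$ (after ordering the degrees), and the identification $\Iset_\Xset(g,f_3)=\Iset_\Xset(f_1,f_3)\cup\Iset_\Xset(f_2,f_3)$ are all valid. What your version buys is an explicit treatment of the boundary configuration: both the removal step (via Corollary \ref{maxhq}/\ref{maxtim2}(ii)) and the merging step need a sum of two degrees to stay $\le n$, which can fail exactly when the third curve is a line; the paper's proof passes over this silently, whereas your ordering of the degrees plus the fallback to Proposition \ref{hatk}(ii) for three maximal lines closes that case cleanly. The price is a longer argument with a case split and a counting identity where the paper needs only a containment.
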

Let us present a simpler 
\begin{proof}
In view of Corollary  \ref{maxtim} we have that
        	$\Yset_1 := \Xset \setminus f_1$ is an $(n-k_1)$-correct set. Next, according to Corollary \ref{maxhq}, we have that
        	$f_2$ and $f_3$ are maximal curves in $Y_1,$ and $k_2+k_3\le n-k_1+2.$  Then, in view of Proposition \ref{prop5} and Remark \ref{rem1}, we have that $f_2\cap f_3\subset \Yset_1= \Xset \setminus f_1.$ This implies that $f_1\cap f_2\cap f_3=\emptyset.$
\end{proof}
Next we find $\#(f_1\cap f_2\cap f_3 \cap \Xset)$ in the case of 
$$k_1+k_2+k_3\ge n+1,\ \hbox{that is,}\ \sigma\ge -1.$$
\begin{proposition} \label{prop1011}
Suppose that $\Xset$ is an $n$-correct set and $f_1, f_2$ and $f_3$ are
three maximal curves, such that every two of them have no common components. Suppose also that $deg f_i=k_i, \ i=1,2,3,$ and $\sigma\ge -1.$
Then we have the following five expressions for $\#(f_1\cap f_2\cap f_3 \cap \Xset):$
\begin{enumerate}
\item
$N-d(n,k_1)-d(n,k_2)-d(n,k_3)+H_{k_1,k_2}^n+H_{k_2,k_3}^n+H_{k_1,k_3}^n,$
\item
$N-N_{n-k_1}-N_{n-k_2}-N_{n-k_3}+N_{n-k_1-k_2}+N_{n-k_2-k_3}+N_{n-k_1-k_3},$
\item
$N-d(n-k_1, k_2)-d(n-k_2,k_3)-d(n-k_3,k_1),$
\item
${1\over 2} \sigma(\sigma+1)-N_{k_1+k_2-n-3}-N_{k_2+k_3-n-3}-N_{k_3+k_1-n-3},$
\item
${1\over 2} \sigma(\sigma+1),\ \hbox{provided that}\  k_i+k_j\le n+2\ \forall\ 1\le i<j\le 3.$ 
\end{enumerate}

\end{proposition}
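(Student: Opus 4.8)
The plan is to reduce the whole computation to one inclusion–exclusion identity for the three sets $\Xset\cap f_i$, which yields expression (i) at once, and then to obtain (ii)--(v) from (i) by the arithmetic of the quantities $N_s$, $d(n,k)$ and $H^n_{k,m}$ already introduced in the paper.

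First I would show that $\Xset\subset f_1\cup f_2\cup f_3$. The hypothesis $\sigma\ge -1$ means $k_1+k_2+k_3\ge n+1$. If some node $A\in\Xset$ lay on none of the three curves, then $A$ would use each $f_i$ by Proposition~\ref{maxcor}; since the $f_i$ are pairwise without common components, the product $f_1f_2f_3$ would divide $p^\star_{A,\Xset}$, forcing $k_1+k_2+k_3=\deg(f_1f_2f_3)\le n$, a contradiction. This is exactly the mechanism of Remark~\ref{rem} and of the proof of Proposition~\ref{prop8}. With $A_i:=\Xset\cap f_i$ then satisfying $A_1\cup A_2\cup A_3=\Xset$, $A_i\cap A_j=\Iset_\Xset(f_i,f_j)$ and $A_1\cap A_2\cap A_3=f_1\cap f_2\cap f_3\cap\Xset$, the inclusion–exclusion formula solved for the triple intersection gives
\begin{equation*}
\#(f_1\cap f_2\cap f_3\cap\Xset)=N-\sum_{i}\#(\Xset\cap f_i)+\sum_{i<j}\#\Iset_\Xset(f_i,f_j).
\end{equation*}
Here $\#(\Xset\cap f_i)=d(n,k_i)$ by maximality, and $\#\Iset_\Xset(f_i,f_j)=H^n_{k_i,k_j}$ by Proposition~\ref{prop6} (available since the curves are pairwise without common components). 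This is expression (i).

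Expression (ii) then follows by substituting $d(n,k_i)=N-N_{n-k_i}$ from \eqref{dnk1} and $H^n_{k_i,k_j}=N-N_{n-k_i}-N_{n-k_j}+N_{n-k_i-k_j}$ from \eqref{zsss2} (valid in all cases) and collecting terms, and expression (iii) follows from (ii) once one writes $d(n-k_i,k_j)=N_{n-k_i}-N_{n-k_i-k_j}$.

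The heart of the matter is passing from (ii) to (iv), and this is the step I expect to be the main obstacle. The clean identity behind it is that $s\mapsto\binom{s+2}{2}$ is a quadratic polynomial, so its third finite difference vanishes:
\begin{equation*}
\binom{n+2}{2}-\sum_{i}\binom{n-k_i+2}{2}+\sum_{i<j}\binom{n-k_i-k_j+2}{2}-\binom{n-k_1-k_2-k_3+2}{2}=0.
\end{equation*}
Since $n-k_1-k_2-k_3=-\sigma-2$, the last binomial equals $\tfrac12\sigma(\sigma+1)$. The subtlety is that (ii) is written with the truncated quantities $N_s$ (with $N_s=0$ for $s<0$), whereas the vanishing third difference uses the untruncated binomials. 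As $n-k_i\ge0$ for every $i$, the only discrepancy is in the pairwise terms with $k_i+k_j\ge n+3$, where $N_{n-k_i-k_j}=0$ but $\binom{n-k_i-k_j+2}{2}=\binom{k_i+k_j-n-1}{2}=N_{k_i+k_j-n-3}$. Carefully accounting for exactly these corrections converts the untruncated identity into (iv). Finally, (v) is the special case of (iv) in which $k_i+k_j\le n+2$ for all pairs, so every correction term $N_{k_i+k_j-n-3}$ vanishes and only $\tfrac12\sigma(\sigma+1)$ remains.
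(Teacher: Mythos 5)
Your proof is correct, and its skeleton coincides with the paper's: both first establish $\Xset\subset f_1\cup f_2\cup f_3$ from $\sigma\ge-1$ via the divisibility argument, then solve the inclusion--exclusion identity for the triple intersection using Proposition~\ref{prop6} to get (i), and pass to (ii) and (iii) by the same substitutions. Where you genuinely diverge is in the order and mechanism of the last two items. The paper proves (v) first, by brute-force expansion of (iii) using \eqref{dnk2} and completing the square in $k_1+k_2+k_3$, and then upgrades (v) to (iv) by introducing the untruncated quantity $\widetilde d(n,k)$ and the correction identity $\widetilde d(n,k)=d(n,k)-N_{-n+k-3}$. You instead prove (iv) directly from (ii) by observing that the third mixed finite difference of the quadratic $s\mapsto\binom{s+2}{2}$ vanishes, so that the untruncated alternating sum collapses to $\binom{-\sigma}{2}=\tfrac12\sigma(\sigma+1)$, and the truncation discrepancies $N_{k_i+k_j-n-3}$ appear exactly as the stated correction terms; (v) then drops out as the special case where all corrections vanish. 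The two computations are of course algebraically equivalent (your untruncated binomial is the paper's $\widetilde d$ in disguise), but your route explains conceptually \emph{why} the answer is $\tfrac12\sigma(\sigma+1)$ rather than verifying it by expansion, and it handles (iv) and (v) in one pass. One point worth making explicit if you write this up: the identity $\binom{n-k_i-k_j+2}{2}=N_{n-k_i-k_j}+N_{k_i+k_j-n-3}$ holds for \emph{all} values of $n-k_i-k_j$ (both summands vanish when it equals $-1$ or $-2$), which is what makes the bookkeeping uniform.
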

\begin{proof}
First let us verify that 
\begin{equation}\label{n+1+}
	\Xset\subset f_1\cup f_2 \cup f_3. 
\end{equation} Indeed, assume conversely that $A\in \Xset\setminus f_1f_2f_3.$
Then $f_1f_2f_3$ divides $P^\star_{A,\Xset},$ which, in view of the condition $\sigma\ge -1,$ i.e., $k_1+k_2+k_3\ge n+1,$ is contradiction.

Now denote $\gamma=\#(f_1\cap f_2\cap f_3 \cap \Xset).$ 
In view of \eqref{n+1+} we have that
$$ N=\#[(f_1\cup f_2\cup f_3 \cup)\cap \Xset]=   \#(f_1 \cap \Xset)+ \#(f_2\cap \Xset)+ \#(f_3 \cap \Xset)$$ $$-
\#(f_1\cap f_2 \cap \Xset) - \#(f_2\cap f_3 \cap \Xset) - \#(f_1\cap f_3 \cap \Xset) +\gamma$$
$$=d(n,k_1)+d(n,k_2)+d(n,k_3)-H_{k_1,k_2}^n-H_{k_2,k_3}^n-H_{k_1,k_3}^n+\gamma.$$ This proves the item (i).

Then we continue by using the equality \eqref{zsss2} here:
$$N= d(n,k_1)+d(n,k_2)+d(n,k_3)-3N+2N_{n-k_1}+2N_{n-k_2}+2N_{n-k_3}$$ $$-N_{n-k_1-k_2}-N_{n-k_2-k_3}-N_{n-k_1-k_3}+\gamma.$$
Next, in view of the definition of $d(n,k),$ we have
$$N= (N-N_{n-k_1})+(N-N_{n-k_2})+(N-N_{n-k_3})-3N+2N_{n-k_1}+2N_{n-k_2}+2N_{n-k_3}$$
$$-N_{n-k_1-k_2}-N_{n-k_2-k_3}-N_{n-k_1-k_3}+\gamma$$ 
$$=N_{n-k_1}-N_{n-k_1-k_2}+N_{n-k_2}-N_{n-k_2-k_3}+N_{n-k_3}-N_{n-k_1-k_3}+\gamma$$
$$=d(n-k_1, k_2)+d(n-k_2,k_3)+d(n-k_3,k_1)+\gamma.$$
In the last equality, which proves the item (iii) we again used the definition of $d(n,k).$ Note that the previous equality proves the item (ii).

Now let us prove (v).
In view of the item (iii) and \eqref{dnk2} we have that
$$\#(f_1\cap f_2\cap f_3 \cap \Xset)$$ 
$$= N- {1\over 2} k_2(2n-2k_1-k_2+3)- {1\over 2} k_3(2n-2k_2-k_3+3)- {1\over 2} k_1(2n-2k_3-k_1+3)$$
$$= N-{1\over 2} [(k_1+k_2+k_3)(2n+3)]-{1\over 2}[k_1^2+k_2^2+k_3^2+2k_1k_2+2k_2k_3+2k_3k_1]$$ 
$$= N-{1\over 2} [(k_1+k_2+k_3)(2n+3)]-{1\over 2}[k_1+k_2+k_3]^2$$ 
$$= N-{1\over 2} [(k_1+k_2+k_3)(2n+3-k_1-k_2-k_3)]$$ 
$$= {1\over 2} [(n+2)(n+1)]-{1\over 2} [(n+2+\sigma)(n+1-\sigma)]= {1\over 2} \sigma(\sigma+1).$$ 
At the end let us prove the item (iv) which is an extension of the item (v).
For this end let us define the following quantity $\widetilde d(n,k)$ for all $n,k\ge 0:$ 
\begin{equation}\label{dnk3+}\widetilde d(n, k) := (n-k+2)+(n-k+3)+\cdots+(n+1)\ \left[={k(2n+3-k)\over 2}\right].\end{equation}

Let us verify that 
\begin{equation}\label{dnk3++}\widetilde d(n, k)=d(n,k)-N_{-n+k-3},\ \forall n,k\ge 0.\end{equation}
Indeed, if $0\le k\le n+2$ then, in view of \eqref{dnk2}, we have that $ \widetilde d(n, k)=d(n,k)=d(n,k)-N_{-n+k-3}.$
While if  $k\ge n+3$ then we have that $ \widetilde d(n, k)=N_n-N_{-n+k-3}=d(n,k)-N_{-n+k-3}.$

Next observe that the proof of the item (v), in view of \eqref{dnk3+}, means that
\begin{equation}\label{dnk3-}N-\widetilde d(n-k_1, k_2)-\widetilde d(n-k_2,k_3)-\widetilde d(n-k_3,k_1)={1\over 2} \sigma(\sigma+1),\end{equation}
where $0\le k_i\le n,\ i=1,2,3.$

\noindent Finally, by using the item (iii) and the 
relations \eqref{dnk3++} and \eqref{dnk3-} , we obtain that 
$$\#(f_1\cap f_2\cap f_3 \cap \Xset)=N-d(n-k_1, k_2)- d(n-k_2,k_3)-d(n-k_3,k_1)$$ $$=N-\widetilde d(n-k_1, k_2)-\widetilde d(n-k_2,k_3)-\widetilde d(n-k_3,k_1)$$ 
$$-N_{k_1+k_2-n-3}-N_{k_2+k_3-n-3}-N_{k_3+k_1-n-3}$$
$$={1\over 2} \sigma(\sigma+1)-N_{k_1+k_2-n-3}-N_{k_2+k_3-n-3}-N_{k_3+k_1-n-3}.$$
\end{proof}

\section{The maximal curves in $GC_n$ sets}

\begin{proposition} \label{nor} Suppose that $\Xset$ is $GC_n$ set and $f$ is a maximal curve of degree $k,\ 1\le k\le n.$ Then $f$ is a product of $k$ lines. 

Moreover, if the Gasca-Maeztu conjecture  holds for all degrees up to $n,$ then at least one of the above $k$ lines is a maximal line.
\end{proposition}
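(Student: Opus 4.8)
The plan is to prove the two assertions in turn, the factorization being short and the existence of a maximal line requiring an induction on $n$.

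For the first assertion I would pick any node $A\in\Xset\setminus f$, a nonempty set since $f$ meets only $d(n,k)=N_n-N_{n-k}<N_n$ nodes when $k\ge 1$. By Proposition \ref{maxcor} the node $A$ uses $f$, so $f$ divides $p^\star_{A,\Xset}$; as $\Xset$ is a $GC_n$ set this fundamental polynomial is a product of $n$ linear factors, and since $f$ has no multiple components (as noted after Definition \ref{def:maximal}) unique factorization forces $f$ to be a product of $k$ of these lines. This already gives the factorization $f=\ell_1\cdots\ell_k$.

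For the moreover part I would induct on $n$, the case $n=1$ being trivial, and the two extreme values of $k$ being immediate. If $k=1$ then $f$ is itself a maximal line. If $k=n$ then $d(n,n)=N-1$, so $f$ is the fundamental polynomial of the unique node $A\in\Xset\setminus f$; by Corollary \ref{CGcor} this node uses some maximal line, and that line, dividing $p^\star_{A,\Xset}=\ell_1\cdots\ell_n$, must be one of the $\ell_i$. So I may assume $2\le k\le n-1$. Since the Gasca--Maeztu conjecture holds for degree $n$, the $GC_n$ set $\Xset$ possesses a maximal line $m$; if $m$ is one of the components $\ell_i$ there is nothing more to prove, so suppose $m$ has no component in common with $f$. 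Then I would check that $\Xset':=\Xset\setminus m$ is a $GC_{n-1}$ set: each $A\in\Xset'$ lies off $m$, so Proposition \ref{prp:n+1ell} gives $p^\star_{A,\Xset}=m\,q_A$ with $q_A=p^\star_{A,\Xset'}$, and removing the factor $m$ from a product of $n$ lines leaves a product of $n-1$ lines. By Proposition \ref{prop5} and Remark \ref{rem1} the line $m$ meets $f$ in exactly $k$ nodes, so $f$ passes through $d(n,k)-k=d(n-1,k)$ nodes of $\Xset'$ (using \eqref{dnk1}) and is a maximal curve of degree $k\le n-1$ for the $GC_{n-1}$ set $\Xset'$. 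The induction hypothesis then produces a component $\ell_i$ of $f$ that is a maximal line of $\Xset'$, hence passes through $n$ nodes of $\Xset'$.

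The step I expect to be the main obstacle is lifting maximality from $\Xset'$ back to $\Xset$, since a priori $\ell_i$ might gain no node on $m$ and stay an $n$-node line of $\Xset$. I would close this by counting once more: the point set $m\cap f=\bigcup_{j=1}^{k}(m\cap\ell_j)$ consists of at most $k$ points, while $\#\Iset_\Xset(m,f)=k$ nodes lie on it, so every $m\cap\ell_j$ must be a distinct node of $\Xset$. In particular $m\cap\ell_i$ is a node lying on $m$, hence outside $\Xset'$ and distinct from the $n$ nodes of $\ell_i$ found there; therefore $\ell_i$ passes through $n+1$ nodes of $\Xset$ and is a maximal line, which completes the induction.
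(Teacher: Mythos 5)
Your proof is correct and follows essentially the same route as the paper: both factor $f$ through the fundamental polynomial of a node off $f$, then peel off maximal lines (each of which meets every component $\ell_j$ of $f$ at a node of $\Xset$, by Proposition \ref{prop5}) until Corollary \ref{CGcor} can be applied, and lift maximality back up through those intersection nodes. The only difference is organizational: you run the descent as a formal induction on $n$ with the case $k=n$ as the endpoint, whereas the paper performs the same descent iteratively ($n-k$ steps down to a $GC_k$ set in which $f$ is a fundamental polynomial) inside a proof by contradiction.
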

\begin{proof} Consider a node $A\in\Xset\setminus f.$ According to Proposition \ref{maxcor} we have that
$$p^\star_A=f q, \ \hbox{where}\ q\in \Pi_{n-k}.$$
On the other hand, since $\Xset$ is $GC_n$ set, we have that
\begin{equation}\label{n1nk0}p^\star_A=\ell_1\cdots\ell_n, \ \hbox{where}\ \ell_i\in \Pi_1.\end{equation}
From these two equalities, we obtain readily that $f$ is a product of $k$ lines from the right hand side of \eqref{n1nk0}, say,
\begin{equation}\label{n1nk}f=\ell_{1}\cdots\ell_{k}.  
\end{equation}
Now, suppose that the Gasca-Maeztu conjecture  holds true. Let us prove that at least one of the lines in \eqref{n1nk} is a maximal line.
Assume, by way of contradiction, that $\ell_{i}$ is not a maximal line  $\forall \ 1\le i\le k.$

In view of GM conjecture consider a maximal line: $\alpha_1$ in $\Xset.$ We have that it is not a component of $f.$
Therefore, in view of Proposition \ref{prop5}, the line $\alpha_1$ intersects the maximal curve $f$ in exactly $k$ nodes. Thus it intersects each line $\ell_{i},\ 1\le i\le k$ at a node.

Now, according to Corollary \ref{maxtim2}, we have that the curve $f$ is a maximal also in the $GC_{n-1}$ set $\Xset_{1}:=\Xset\setminus \alpha_1.$ 

Then, consider a maximal $n$-node line $\alpha_2$ in $\Xset_{1}.$ We have that it is not a component of $f.$ Indeed, assume, by way of contradiction, that this line is a component of $f,$ say $\alpha_2=\ell_1,$ then evidently the line $\ell_{1}$ is an $(n+1)$ node line in $\Xset,$ i.e., it is a maximal line there.  Thus $\alpha_2$ also intersects each line $\ell_{i},\ 1\le i\le k$ at a node of $\Yset_{1}.$  Next set $\Xset_{2}:=\Xset\setminus (\alpha_1\cup \alpha_2),$ and so on.

By continuing this way we get maximal lines $\alpha_i$ in 
$$\Xset_{i}:=\Xset\setminus (\alpha_1\cup\cdots\cup \alpha_{i-1}),\ i=1,\ldots, n-k,$$ and the curve $f,\ \deg f=k,$ is a maximal curve also in $GC_k$ 
set $\Xset_{n-k}.$

Therefore $f$ is a fundamental polynomial of a node  $B\in\Xset_{n-k}.$ Hence by Corollary \ref{CGcor} we get that some component of $f,$ say $\ell_{1},$ is a maximal line in $\Xset_{n-k}.$ But then evidently the line $\ell_{1}$ is an $(n+1)$ node line in $\Xset,$ i.e., it is a maximal line there. 
\end{proof}
Next, we characterize maximal curves in $GC_n$ sets.
\begin{proposition} \label{nor2} Suppose that $\Xset$ is a $GC_n$ set and the Gasca-Maeztu conjecture  holds for all degrees up to $n.$ Then  $f$ is a maximal curve of degree $k,\ 1\le k\le n,$ if and only if
\begin{equation}\label{mmm} f=\ell_1\cdots\ell_k,
\end{equation}
where the set $\ell_i\setminus (\ell_1\cup\cdots\cup\ell_{i-1})$ contains exactly $n+2-i$ nodes of $\Xset,\ i=1,\ldots, k.$ 
\end{proposition}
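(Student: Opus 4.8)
The plan is to establish the two implications separately, handling the forward (and harder) direction by induction on the degree $k$.

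For the reverse implication $(\Leftarrow)$ I would argue by pure counting, using neither the $GC_n$ hypothesis nor the Gasca-Maeztu conjecture. Given $f=\ell_1\cdots\ell_k$ with $\ell_i\setminus(\ell_1\cup\cdots\cup\ell_{i-1})$ containing exactly $n+2-i$ nodes of $\Xset$, every node of $\Xset$ lying on $f$ lies on some component and, assigned to the least such index, is counted exactly once among these difference sets; hence $f$ passes through $\sum_{i=1}^k (n+2-i)=(n+1)+n+\cdots+(n+2-k)=d(n,k)$ nodes by \eqref{dnk2}. Since each count $n+2-i\ge n+2-k\ge 2$ is positive, no two components can coincide, so $\deg f=k\le n$, and $f$ is a maximal curve by Definition \ref{def:maximal}.

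For the forward implication $(\Rightarrow)$ I would induct on $k$. The base case $k=1$ is immediate, a maximal curve of degree one being a maximal line through $d(n,1)=n+1=n+2-1$ nodes. For $k\ge 2$, Proposition \ref{nor} gives $f=\ell_1\cdots\ell_k$ with at least one maximal component, which I relabel $\ell_1$; it passes through $n+1$ nodes. Writing $g:=\ell_2\cdots\ell_k$ of degree $k-1$, I would apply Corollary \ref{maxhq} to the maximal curves $\ell_1$ and $\ell_1 g=f$ to conclude that $g$ is a maximal curve of degree $k-1$ for the $(n-1)$-correct set $\Xset_1:=\Xset\setminus\ell_1$. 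To run the inductive hypothesis on $\Xset_1$ I would first record, exactly as in the proof of Proposition \ref{nor}, that $\Xset_1$ is a $GC_{n-1}$ set: each $A\in\Xset_1$ uses $\ell_1$ by Proposition \ref{prp:n+1ell}, so $p^\star_{A,\Xset}=\ell_1 q_A$ with $q_A=p^\star_{A,\Xset_1}$ a product of $n-1$ lines. Since the Gasca-Maeztu conjecture holds up to $n$, it holds up to $n-1$, so the inductive hypothesis applies to $g$ in $\Xset_1$.

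The inductive hypothesis then yields an ordering of $\ell_2,\dots,\ell_k$ with $\ell_i\setminus(\ell_2\cup\cdots\cup\ell_{i-1})$ containing exactly $(n-1)+2-(i-1)=n+2-i$ nodes of $\Xset_1$ for $i=2,\dots,k$. The one point requiring care, which I regard as the main (though modest) obstacle, is to match these counts with the ones in the statement relative to $\Xset$: for $i\ge 2$ a node $A\in\Xset$ lies in $\ell_i\setminus(\ell_1\cup\cdots\cup\ell_{i-1})$ precisely when it lies off $\ell_1$ (so $A\in\Xset_1$) and in $\ell_i\setminus(\ell_2\cup\cdots\cup\ell_{i-1})$, and these are literally the same condition because $\Xset_1$ is exactly the set of nodes of $\Xset$ off $\ell_1$. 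Thus each difference set carries $n+2-i$ nodes of $\Xset$ for $i=2,\dots,k$, and prepending the case $i=1$ for the maximal line $\ell_1$ completes the induction.
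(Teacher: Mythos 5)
Your proof is correct. The forward direction is essentially the paper's own argument: induction on $k$ via Proposition \ref{nor}, peeling off a maximal line $\ell_1$ and applying Corollary \ref{maxhq} to pass to the $GC_{n-1}$ set $\Xset_1=\Xset\setminus\ell_1$; your extra care about why $\Xset_1$ is a $GC_{n-1}$ set and about the index bookkeeping only makes explicit what the paper leaves implicit. The reverse direction, however, takes a genuinely different and more elementary route. The paper again inducts on $k$ and shows that every node $A\in\Xset\setminus f$ uses $f$: assuming $A$ uses $\ell_1\cdots\ell_{k-1}$, it writes $p^\star_A=\ell_1\cdots\ell_{k-1}\,g$ with $g\in\Pi_{n-k+1}$, observes that $g$ must vanish at the $n-k+2$ nodes of $\ell_k\setminus(\ell_1\cup\cdots\cup\ell_{k-1})$, extracts the factor $\ell_k$ by Proposition \ref{prp:n+1ell}, and concludes via the characterization in Proposition \ref{maxcor}. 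You instead note that the difference sets partition $f\cap\Xset$, so $f$ passes through exactly $\sum_{i=1}^k(n+2-i)=d(n,k)$ nodes and is maximal directly by Definition \ref{def:maximal}; your observation that the positive counts force the $k$ lines to be distinct correctly disposes of the multiple-component issue. Your counting argument uses neither the $GC_n$ hypothesis nor the Gasca-Maeztu conjecture, so it in fact establishes the sufficiency of \eqref{mmm} for an arbitrary $n$-correct set, which is slightly stronger; the paper's version has the side benefit of exhibiting explicitly how each node's fundamental polynomial factors through $f$. Both arguments are valid.
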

\begin{proof} $(\Rightarrow)$ Let us use induction by $k.$ The case $k=1$ is trivial.
Assume that $f$ is a maximal curve of degree $k\ge 2.$ According to Proposition \ref{nor} $f$ is a product of $k$ lines, one of which is a maximal line. Denote the latter line by $\ell_1.$ Thus we have that
$$f=\ell_1 g,\ g\in \Pi_{k-1}.$$
Now, in view of Corollary \ref{maxhq}, we conclude that $g$ is a maximal curve of degree $k-1$ of $GC_{n-1}$ set $\Xset_1:=\Xset\setminus \ell_1.$  Now by induction hypothesis we have that
$$g=\ell_2\cdots\ell_k,$$
where the set $\ell_i\setminus (\ell_2\cup\cdots\cup\ell_{i-1})$ contains exactly $n+2-i$ nodes of $\Xset_1,\ i=2,\ldots, k,$ i.e.,  the set $\ell_i\setminus (\ell_1\cup\cdots\cup\ell_{i-1})$ contains exactly $n+2-i$ nodes of $\Xset.$

$(\Leftarrow)$
Now assume that \eqref{mmm} takes place. Let us prove that $f$ is a maximal curve.
In view of Proposition \ref{maxcor} it suffices to prove that each node $A\in \Xset\setminus f$ uses $f.$ 

Let us use induction by $k$ again. The case $k=1$ is trivial.
Now by induction hypothesis we have that $\ell_1\cdots\ell_{k-1}$ is a maximal curve and hence $A$ uses it:
$$P^\star_A=\ell_1\cdots\ell_{k-1}g,\ g\in\Pi_{n-k+1}.$$
Then we have that the set $\ell_k\setminus (\ell_1\cup\cdots\cup\ell_{k-1})$ contains exactly $n+2-k$ nodes of $\Xset.$ Note that at these nodes $P^\star_A$ vanishes while $\ell_1, \ldots\ell_{k-1}$ do not vanish. Therefore at these $n-k+2$ nodes vanishes the polynomial $g\in\Pi_{n-k+1}.$
Now, by using Proposition \ref{prp:n+1ell}, we obtain that $g=\ell_k r,$ where $r\in\Pi_{n-k}.$ Hence 
$$P^\star_A=\ell_1\cdots\ell_k r,$$
that is, $A$ uses $f.$\end{proof}

\begin{definition} A finite set $\Lset$ of lines is said to be in \emph{general position,} if 
\begin{enumerate}
\item
no two lines of $\Lset$ are parallel, and 
\item
no three lines of $\Lset$ are concurrent.
\end{enumerate}
\end{definition}

Let a set $\Lset=(\ell_1,\ldots,\ell_{n+2})$ of lines be in general position.
Then the set $\Xset$ of ${n+2\choose 2}$ intersection points
of these lines is called Chung-Yao set of degree $n$ (see \cite{HV}). Note that each fixed node belongs to $2$ lines, and the product of remaining $n$ lines gives the fundamental
polynomial of the fixed node. Thus $\Xset$ is $GC_n$ set. Notice that the lines $\ell_i,\ i=1,\ldots,n+2,$ are maximal for $\Xset.$ In view of Proposition \ref{hatk}, (iii), no other line is maximal for the set $\Xset,$ that is, $\Lset=\Mset(\Xset).$

\begin{corollary} 
Suppose that $\Xset$ is Chung-Yao lattice of degree $n$ with set of maximal lines $\Lset.$ Suppose also that $f$ is a maximal curve of degree $k.$ Then $f$ is a product of $k$ maximal lines from $\Lset.$ 

Moreover, if $f_1$ and $f_2$ are maximal curves of degrees $k_1$ and $k_2,$ respectively, with no common components, then we have that $k_1+k_2\le n+2.$
\end{corollary}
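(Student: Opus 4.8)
The plan is to exploit the defining feature of a Chung--Yao lattice: the fundamental polynomial of \emph{every} node is already a product of maximal lines. Once this is in hand, the first assertion drops out of Proposition~\ref{nor} with no extra work, and the second reduces to counting the elements of $\Lset$.

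For the first statement I would start by recalling the explicit fundamental polynomials of $\Xset$. Each node is an intersection $A=\ell_i\cap\ell_j$ of two of the lines of $\Lset$, and its fundamental polynomial is $p^\star_A=\prod_{m\ne i,j}\ell_m$, i.e.\ the product of the $n$ maximal lines of $\Lset$ other than $\ell_i,\ell_j$. Next I would pick any node $A\in\Xset\setminus f$; such a node exists because $f$ passes through exactly $d(n,k)=N-N_{n-k}\le N-1<N$ nodes, so $\Xset\setminus f$ is nonempty. By Proposition~\ref{maxcor} the node $A$ uses the maximal curve $f$, that is $f\mid p^\star_A$. Since $p^\star_A$ equals, up to a constant, the product of the $n$ pairwise non-proportional linear polynomials $\ell_m$ ($m\ne i,j$)---distinctness of these lines being immediate from the general position of $\Lset$---and since the maximal curve $f$ has no multiple components, unique factorization of bivariate polynomials into linear factors forces each component of $f$ to coincide with one of the $\ell_m$. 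This is exactly the divisibility step already carried out in the proof of Proposition~\ref{nor}; the point is that here the factors of $p^\star_A$ are themselves members of $\Lset$, so the conclusion strengthens to: $f$ is a product of $k$ maximal lines from $\Lset$. In particular no appeal to the Gasca--Maeztu conjecture is needed for this specific set, because the fundamental polynomials provide the maximal-line factors directly.

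For the second statement I would simply apply the first part to each of $f_1$ and $f_2$, obtaining that $f_i$ is a product of $k_i$ maximal lines drawn from $\Lset$. The hypothesis that $f_1$ and $f_2$ share no common component means no line of $\Lset$ occurs in both factorizations, so the $k_1$ lines factoring $f_1$ together with the $k_2$ lines factoring $f_2$ are $k_1+k_2$ distinct elements of $\Lset$. Since $\#\Lset=n+2$ (recall $\Lset=\Mset(\Xset)$), this yields $k_1+k_2\le n+2$.

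The argument is short because the Chung--Yao structure does all the heavy lifting. The only place demanding care is the unique-factorization bookkeeping: I must be sure that distinct lines of $\Lset$ give pairwise non-proportional linear forms (clear from general position) and that $f$, being maximal, carries no repeated linear factor (noted just after Definition~\ref{def:maximal}), so that matching the irreducible factors of $f$ against those of $p^\star_A$ is unambiguous. This is the main---and quite mild---obstacle; everything else is a direct reading of Proposition~\ref{nor} and a cardinality count.
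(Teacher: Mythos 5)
Your proof is correct and follows essentially the same route as the paper: the paper likewise observes that for a Chung--Yao node the factors of $p^\star_A$ in \eqref{n1nk0} are all maximal lines of $\Lset$, so the divisibility argument from Proposition \ref{nor} forces the components of $f$ in \eqref{n1nk} to be $k$ distinct lines of $\Lset$, and the bound $k_1+k_2\le n+2$ is then the cardinality count $\#\Lset=n+2$. Your write-up just makes explicit the unique-factorization and nonemptiness details that the paper leaves implicit.
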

Indeed, in this case all the lines in the right hand side of the equality \eqref{n1nk0} are maximal lines, hence all the lines in the right hand side of the equality \eqref{n1nk} are maximal lines too.

At the end we bring a construction of $n$-correct set with maximal curves, which are not necessarily products of lines. 

Note that, in view of Propositions \ref{HT1} and \ref{HT2}, any algebraic curve $f$ of degree $k\ge 1$ with no multiple components is maximal curve of degree $k$ for an $n$-correct set $\Xset,$ where $n\ge k.$

Let us then consider two arbitrary algebraic curves $f$ and $g$ of degrees $m$ and $k,$ respectively, which intersect at $mk$ distinct points:
$$
\Iset:=\Iset(f,g)=f\cap g, \ \#\Iset=mk. 
$$

Below, for each value of $\delta=0, 1,$ we provide a construction of an $n$-correct set $\Xset,\ n=m+k-2+\delta,$ for which both curves $f$ and $g$ are maximal curves.

For this end consider  an $(m-2+\delta)$-correct set in $f\setminus\Iset$ and
an $(k-2+\delta)$-correct set in $g\setminus\Iset,$ denoted by $\Cset(f)$ and $\Cset(g),$ respectively.

For the set of nodes $$\Xset:=\Iset(f,g)\cup\Cset(f)\cup\Cset(g)$$ we have the following proposition (for the first statement see \cite{HakopianBezout}, pp.78-84).
\begin{proposition} The set $\Xset$ is $n$-correct and $f$ and $g$ are maximal curves.
\end{proposition}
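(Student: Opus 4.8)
The plan is to prove the two assertions separately, the correctness being the substantial one and the maximality a short corollary of it.

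First I would record the cardinality. The three pieces of $\Xset$ are pairwise disjoint: $\Cset(f)$ and $\Cset(g)$ were chosen off $\Iset$, and $\Cset(f)\cap\Cset(g)\subset f\cap g=\Iset$ is therefore empty as well. Hence $\#\Xset=mk+N_{m-2+\delta}+N_{k-2+\delta}=mk+\binom{m+\delta}{2}+\binom{k+\delta}{2}$, and a direct computation shows that for $n=m+k-2+\delta$ this equals $\binom{m+k+\delta}{2}=N_n$, for both $\delta=0$ and $\delta=1$. By Proposition \ref{correctii}, since $\#\Xset=N_n$, it now suffices to show that $p\in\Pi_n$ with $p|_\Xset=0$ forces $p=0$.

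The engine of the argument is the fundamental theorem of Noether (the ``$AF+BG$'' theorem). Since $\Iset=f\cap g$ consists of $mk$ \emph{distinct} points it is a transversal complete intersection, so Noether's conditions hold at every point of $\Iset$; consequently any $p\in\Pi_n$ vanishing on $\Iset$ admits a representation
$$p=Af+Bg,\qquad A\in\Pi_{n-m}=\Pi_{k-2+\delta},\ \ B\in\Pi_{n-k}=\Pi_{m-2+\delta}.$$
This is precisely the ingredient cited from \cite{HakopianBezout}, and it is the one non-routine step; I expect the main obstacle to be the verification of Noether's hypotheses, namely that vanishing on the $mk$ distinct points of $f\cap g$ already forces membership in the ideal $(f,g)$ in degree $n$. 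The distinctness of the intersection points (so that each intersection is transversal and both $f,g$ are reduced) is exactly what makes this go through.

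With the representation in hand the conclusion is bookkeeping. On $\Cset(f)\subset f\setminus\Iset$ we have $f=0$ while $g\neq0$, since the zeros of $g$ on $f$ are exactly $\Iset$; thus $0=p=Bg$ on $\Cset(f)$ gives $B|_{\Cset(f)}=0$, and as $\Cset(f)$ is $(m-2+\delta)$-correct with $B\in\Pi_{m-2+\delta}$, Proposition \ref{correctii} yields $B=0$. Symmetrically, on $\Cset(g)\subset g\setminus\Iset$ one gets $A|_{\Cset(g)}=0$, whence $A=0$ from the $(k-2+\delta)$-correctness of $\Cset(g)$. Therefore $p=Af+Bg=0$, which establishes $n$-correctness. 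Finally, maximality is a counting statement: since $\Xset\cap f=\Iset\cup\Cset(f)$ we have $\#(\Xset\cap f)=mk+\binom{m+\delta}{2}$, which by the same computation equals $d(n,m)=N_n-N_{n-m}$, so by Definition \ref{def:maximal} the curve $f$ of degree $m\le n$ is maximal; the argument for $g$ is identical with the roles of $m$ and $k$ interchanged.
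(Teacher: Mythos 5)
Your proposal is correct and follows essentially the same route as the paper: the cardinality count, the Max--Noether representation $p=Af+Bg$ with $A\in\Pi_{k-2+\delta}$, $B\in\Pi_{m-2+\delta}$, the restriction to $\Cset(f)$ and $\Cset(g)$ to kill $B$ and $A$ via the correctness of those subsets, and the final node count $\#(f\cap\Xset)=mk+N_{m-2+\delta}=d(n,m)$ for maximality. The only difference is that you spell out a few details the paper leaves implicit (disjointness of the three pieces, why $g\neq0$ on $\Cset(f)$, and the transversality hypothesis for Noether's theorem), which is fine.
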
 
\begin{proof} First let us check that $\#\Xset={n+2\choose 2}.$ Indeed, we have that
$$\#\Xset=\#\Iset(f,g)+\#\Cset(f)+\#\Cset(g)=mk+{m+\delta\choose 2}+{k+\delta\choose 2}={m+k+\delta\choose 2}.$$

Let us then check that $\Xset$ is $n$-correct. To do this, let us assume, in view of Proposition \ref{correctii}, that $p\in\Pi_n,\ p|_\Xset=0$ and show that $p=0.$

Indeed, according to Max-Noether fundamental theorem we have that
$$p=Af+Bg,\ \hbox{where}\ A\in\Pi_{k-2+\delta},\ B\in\Pi_{m-2+\delta}.$$
From here we easily obtain that $A|_{\Cset(g)}=0$ and $B|_{\Cset(f)}=0.$ Since the sets $\Cset(f)$ and $\Cset(g)$ are respectively $(m-2+\delta)$ and $(k-2+\delta)$-correct, we obtain that $A=B=0$ and therefore $p=0.$

Finally, we have that 

$\#(f\cap\Xset)=\#\Iset + N_{m-2+\delta}=d(n,m),$ and similarly $\#(g\cap\Xset)=\#\Iset + N_{k-2+\delta}=d(n,k).$ Therefore $f$ and $g$ are maximal curves of degrees $m$ and $k,$ respectively. 
\end{proof}

\end{document}